\definecolor{HalfGray}{gray}{0.55}
\definecolor{OliveGreen}{rgb}{0,.35,0}
\definecolor{webbrown}{rgb}{.6,0,0}
\definecolor{BrightViolet}{rgb}{0.5,0.2,0.8}
\definecolor{Maroon}{cmyk}{0, 0.87, 0.68, 0.32}
\definecolor{RoyalBlue}{cmyk}{1, 0.50, 0, 0.25}
\definecolor{Black}{cmyk}{0, 0, 0, 0}
\newtheorem{theorem}{Theorem}[section]
\newtheorem{Assumption}{Assumption}
\newtheorem{proposition}[theorem]{Proposition}
\newtheorem{corollary}[theorem]{Corollary}
\newtheorem{lemma}[theorem]{Lemma} \theoremstyle{definition}
 \theoremstyle{remark}
\newtheorem{remark}[theorem]{Remark}
\newcommand{\inr}[1]{\bigl< #1 \bigr>}
\newcommand{\norm}[1]{\left\|#1\right\|}%
\newcommand{\pa}[1]{\left(#1\right)}
\def\ds1{\textrm{1\kern-0.25emI}} 
\newcommand \E{\mathbb{E}}
\newcommand \R{\mathbb{R}}
\newcommand \cB{{\cal B}}
\newcommand \cI{{\cal I}}
\newcommand \cK{{\cal K}}
\newcommand \cM{{\cal M}}
\newcommand \cO{{\cal O}}
\newcommand \bP{{\mathbb P}}
\newcommand \bR{{\mathbb R}}
\renewcommand{\leq}{\leqslant}
\renewcommand{\geq}{\geqslant}
\DeclareMathOperator*{\med}{med}
\DeclareMathOperator*{\argmin}{arg\,min}
\newcommand{\bayes}{f^*}
\title{A MOM-based ensemble method for robustness, subsampling and hyperparameter tuning}
\author{Joon Kwon\\INRA \& AgroParisTech\\Université
  Paris--Saclay\And Guillaume
  Lecué\\CREST\\Université
  Paris--Saclay\And Matthieu
  Lerasle\\CNRS \& Université Paris--Sud\\École polytechnique}
\begin{document}

\maketitle

\begin{abstract}
Hyperparameters tuning and model selection are important steps in
  machine learning. Unfortunately, classical hyperparameter
  calibration and model selection procedures are sensitive to outliers
  and heavy-tailed data.  In this work, we construct a selection
  procedure which can be seen as a robust alternative to
  cross-validation and is based on a median-of-means principle \cite{MR1688610,
    MR855970, MR702836}. Using this procedure, we also build
  an ensemble method which, trained with algorithms and corrupted
  heavy-tailed data, selects an algorithm, trains it with a large
  uncorrupted subsample and automatically tune its
  hyperparameters. The construction relies on a
  divide-and-conquer methodology \cite{Jor:2013}, making this method
  easily scalable for autoML given a corrupted database.  This method
  is tested with the LASSO \cite{MR1379242} which is known to be
  highly sensitive to outliers.
\end{abstract}

\section{Introduction}
Robustness has become an important subject of interest in the machine
learning community over the last few years because large datasets are
very likely to be corrupted.  This may happen due to hardware, storage
or transmission issues, for instance, or as a result of (human)
reporting errors.  As can be seen, for instance, in Figure~1 in
\cite{lecue2017robust} and Figure~1 and 5 in \cite{lecue2018robust},
many learning algorithms based on emprical risk minimization (including
the LASSO) may be completely mislead by a single corrupted example.



Robust alternatives to empirical risk minimizers and their
penalized/regularized versions have been studied in density estimation
\cite{BaraudBirgeSart} and least-squares regression
\cite{MR2906886,LugosiMendelson2016,Fan1, Fan2, Fan3}.  Various robust
descent algorithms have also been recently considered
\cite{Ravikumar2018, oliveira2017sample, oliveira2017sample_2,
  holland2018robust,holland2018classification}.  Despite these
important progresses, the final steps of a data-scientist routine,
which are estimator selection and hyperparameter tuning
\cite{MR1679028, MR1462939, MR3224300} are yet to receive a proper
treatment. In fact, practitioners usually have at disposal several
algorithms, each of these requiring one or several parameters to be
tuned.  An alternative to estimator selection is aggregation
(aka ensemble methods) \cite{MR2163920, MR2051002,MR1775638} which
outputs e.g.\ a linear or convex combination of the candidate estimators;
classical examples include binning, boosting, bagging or stacking.

The most common procedure used to select or aggregate candidate
estimators is (cross-)validation: the dataset is partitioned (several
times in the case of cross-validation) into a \emph{training sample} used to
build candidate estimators and a \emph{test sample} used to estimate
their risks.  The final estimator is either
the candidate with lowest estimated risk, or a linear combination of
the candidates with coefficients depending on the estimated risks.
Even if some candidate estimators are robust, outliers from the test set may
mislead the selection/aggregation step, resulting in a
poor final estimator. This raises the question of a robust
selection/aggregation procedure, which is addressed in the present work.

There exist many data-driven methods to tune hyperparameters or to
select an estimator from a collection of candidates. Among these, one
can mention the SURE method \cite{stein1981estimation}, model
selection
\cite{MR1679028,MR1462939,MR1848946,MR2242356,MR3766945,MR2291502}
where penalization methods are used to select among candidates built
with \emph{the same data} as those used to build the original
estimators, selection, convex or linear aggregation
\cite{MR2051002,MR2356821,MR2198228}, cross-validation
\cite{MR2602303,MR3595142} or Lepski \cite{MR1147167} and the
Goldenschluger-Lepski \cite{goldenshluger:lepski:2008} methods to name
a few. To the best of our knowledge, all these techniques either use a
classical non-robust validation principle or estimate the risk with
the non-robust empirical risk. A notable exception is the estimator
selection procedure of \cite{MR2834722,MR3224300} which is robust in
general settings \cite{MR2834722} and extremely efficient in Gaussian
linear regression \cite{MR3224300}. The main drawback is that this
procedure requires robust tests in Hellinger distance that may be hard
to compute for general learning problems where one does not specify
statistical models with bounded complexities.

The first contribution of this paper is a general and robust estimator
selection procedure with provable theoretical guarantees, which can
be viewed as a robust alternative to cross-validation.  Roughly
stated, the procedure uses a median-of-means principle
\cite{MR1688610, MR855970, MR702836} to build robust pairwise
comparisons between candidates, and the final estimator is then
selected by a minmax procedure in the spirit of \cite{MR2906886,MOM2}
or the Goldenshluger-Lepski method \cite{goldenshluger:lepski:2008},
see Section~\ref{sec:estimator-selection} for details.  The method is
easily implementable. We here focus on least-squares regression and refer
to \cite{LerOli2011} for other examples including density estimation
and classification.

The second contribution is the definition of an ensemble method based on this
selection procedure and a subsampling strategy.  Two of the main ideas
behind this method is that subsampling can provide robustness by
avoiding outliers and that the choice of the subsample can itself be
seen an a hyperparameter to be tuned.  Estimator selection
procedures can then be used to simultaneously select the best algorithm,
an uncorrupted subsample and the best hyperparameters.
Moreover, the method is computationally attractive.

Subsampling is usually used in machine learning for
computational reasons: some algorithms require to break large datasets
into smaller pieces \cite{Jor:2013}, for instance
in supervised learning \cite[for classification and
regression]{Che_Xie:2014} and \cite[for matrix
factorization]{Mac_Taw_Jor:2015}.
A natural way to divide-and-conquer corresponds to the older idea of
\emph{subagging} \cite[subsample aggregating]{Buh:2003}---which is
a variant of bagging \cite{Bre:1996a}:
one randomly chooses several small subsets of data, 
build an estimator from each subsample, 
and aggregate them into a single estimator. 
For instance, the bag of little bootstraps \cite{Kle_etal:2014} 
builds confidence intervals in such a way. 
Subagging is also used for large-scale sparse regression \cite{Bra:2016}. 

The paper is divided as follows. Section~\ref{sec:setting} presents
the general prediction setting we consider, and
Section~\ref{sec:estimator-selection} introduces the robust estimator
selection procedure. Theoretical guarantees for the latter are given in
Theorem~\ref{thm:oracle-inequality}.  The ensemble method is defined
in Section~\ref{sec:Subsampling} and applied to the LASSO in
Section~\ref{sec:applications}.
Applications to the ERM in linear aggregation are presented in Appendix~\ref{sec:appl-erm-line}.
Numerical experiments are presented in
Section~\ref{sec:numer-exper}.  The proofs are outsourced in the
appendix in Appendices~\ref{sec:Proof} and~\ref{Proofs:Main}.

\section{Setting}\label{sec:setting}
For positive integers $k\leqslant l$, let $[k]=\left\{
  1,2,\dots,k \right\}$, $\left\llbracket k,l \right\rrbracket
=\left\{ k,k+1,\dots,l \right\}$, and
reversed double-bar brackets mean exclusion of the corresponding
integer, e.g.\ $\left\rrbracket k,l \right\rrbracket =\left\{
  k+1,k+2,\dots,l \right\}$. We call partition of a set $E$ any family of disjoint subsets of $E$ with union equal to $E$.

Let $\mathbb{X}$ be a measurable space. Let $P$ be a probability
distribution on $\mathbb{X}\times \mathbb{R}$, and let $(X,Y)\sim P$. Denote $P_X$
the marginal distribution of $X$. Assume that $\mathbb{E}\left[ Y^2
\right]<+\infty$. Denote $L^2(P_X)$ the Hilbert space of measurable functions $f:\mathbb{X}\to \mathbb{R}$ such that $\mathbb{E}\left[ f(X)^2 \right]<+\infty$, the norm being denoted by $\left\| f \right\|  =\sqrt{\mathbb{E}\left[ f(X)^2 \right] }$.
For any probability measure $Q$ on $\mathbb{X}\times \mathbb{R}$ and any measurable function $g:\mathbb{X}\times \mathbb{R}\to
\mathbb{R}$, that belongs to $L^1(Q)$, let $Q\left[ g \right]:=\mathbb{E}_{Z\sim Q}\left[ g(Z) \right]$. 


Let $F$ be a linear subspace of $L^2(P_X)$. We call \emph{estimator}
any element of $F$. For $f\in F$, let $\gamma(f):\mathbb{X}\times \mathbb{R}\to \mathbb{R}$ denote the square-loss function associated with $f$, defined by for all $(x,y)\in \mathbb{X}\times \mathbb{R}$ by $\gamma(f)(x,y)=(y-f(x))^2$. For any function $f:\mathbb{X}\to \mathbb{R}$ in $L^2(P_X)$, let $R(f)$ denote its \emph{risk} $R(f):=P\left[ \gamma(f) \right]$ and let $\bayes$ be the \emph{oracle}:
$f^*:=\argmin_{f\in F}R(f)$.
Let $\ell$ denote the \emph{excess risk} with respect to
$f^{*}$:
\[ \ell(f)=R(f)-R(f^*)=P\left[ (f-f^*)^2 \right]=\left\| f-f^*
  \right\|^2. \] The second equality holds since $F$ is a linear space. A \emph{learning algorithm} is a measurable map $G\colon
\bigcup_{n=1}^{+\infty}(\mathbb{X}\times \mathbb{R})^n\to F$ which
takes a dataset of any (finite) size as input and outputs an estimator in $F$.
\begin{Assumption}\label{Ass:L4L2}Let $\chi,\sigma>0$ such that for every $f\in F$,
  \[(P f^4)^{1/4}\leq \chi (Pf^2)^{1/2} \quad \text{and}\quad P\left[ (Y-f^*)^2(f-f^*)^2 \right]\leq \sigma^2 P(f-f^*)^2. \]
\end{Assumption}
This assumption only involves second and fourth moments. The first
assumption $(P f^4)^{1/4}\leq \chi (Pf^2)^{1/2}$ is satisfied for instance by linear functions $f(\cdot)=\inr{\cdot, t}$ for $t\in\bR^d$ and $X$ which is a $d$-dimensional vectors with independent entries with a fourth moment \cite{Shahar-COLT}. It therefore covers heavy-tailed cases beyond classical $L_\infty$-boundedness or subgaussian assumptions. The second assumption $P\left[ (Y-f^*)^2(f-f^*)^2 \right]\leq \sigma^2 P(f-f^*)^2$ holds for instance when the noise $Y-f^*(X)$ is independent of $X$ and has a second moment---which is a very standard statistical modeling assumption when $Y=f^*(X)+\zeta$ with $\zeta$ independent of $X$. It also also holds when $Y-f^*(X)$ has a fourth moment by using Cauchy-Schwarz.

Let $N\geqslant 1$ be the size of the dataset $(X_i,Y_i)_{i\in [N]}$,
which is partitioned into informative data and outliers:
$[N]= \mathcal{O}\sqcup \mathcal{I} $. Informative data
 $(X_i,Y_i)_{i\in \mathcal{I}}$ is assumed independent and
 identically distributed (i.i.d.), with common distribution $P$. No
 assumption is granted on outliers $(X_i,Y_i)_{i\in \mathcal{O}}$. Of
 course, the partition $\mathcal{O}\sqcup \mathcal{I}$ is unknown to
 the learner.
 We call a \emph{subsample} any nonempty subset  $B\subset [N]$ (or the
 corresponding data $(X_i,Y_i)_{i\in B}$),
 and for any measurable function $g:\mathbb{X}\times \mathbb{R}\to
 \mathbb{R}$, denote:
 \[ P_B\left[ g \right]=\frac{1}{\left| B \right| }\sum_{i\in B}^{}g(X_i,Y_i). \]
 
 \section{Minmax-MOM selection: a robust alternative to
   cross-validation}
\label{sec:estimator-selection}
Let $(\hat{f}_m)_{m\in \mathcal{M}}$ be a finite collection of
estimators.  For each index $m\in \mathcal{M}$, we assume that
there exists a learning algorithm $G_m$ and a subsample $B_m\subset
[N]$ of cardinality less than $N/4$ such that $\hat{f}_m$ is the
estimator trained by algorithm $G_m$ using subsample $B_m$; in other
words $\hat{f}_m=G_{m}((X_i,Y_i)_{i\in B_m})$ (the
remaining of the dataset will be used to estimate the risk of the
estimator, like in cross-validation).
The best choice of $m\in\cM$ regarding our final objective satisfies
\begin{align}\label{eq:EstByTests}
  \begin{split}
    m_o&:=\argmin_{m\in \cM}P\left[ \gamma(\hat{f}_m) \right]=\argmin_{m\in \cM}\max_{m'\in\cM}P\left[  \gamma(\hat{f}_m)-\gamma(\hat{f}_{m'}) \right]\enspace. 
  \end{split}
\end{align}
However, the real-valued expectations
$P\big[\gamma(\hat{f}_m)-\gamma(\hat{f}_{m'}) \big]$ are unknown: let
us construct a robust estimator of those quantities.
Let $V\in \left\llbracket 1,N/8 \right\rrbracket$.
For each couple $(m,m')\in \mathcal{M}^2$, let $(T_v^{(m,m')})_{v\in
  [V]}$ be a partition into $V$ blocks of a subset of $[N]\setminus (B_m\cup B_{m'})$, such that $\big| T_v^{(m,m')} \big| \geqslant N/4V$ for all $v\in [V]$. The estimates $\mathcal{T}(m,m')$ of $P\big[ \gamma(\hat{f}_m)-\gamma(\hat{f}_{m'}) \big]$ are defined by:
\[ \mathcal{T}(m,m'):=\med_{v\in [V]}\left\{  P_{T_v^{(m,m')}}\left[ \gamma(\hat{f}_m)-\gamma(\hat{f}_{m'}) \right] \right\},  \]
in other words, $\mathcal{T}(m,m')$ is the median of the $V$ empirical
means $P_{T_v^{(m,m')}}\big[
\gamma(\hat{f}_m)-\gamma(\hat{f}_{m'})\big], v\in[V]$. The selection
of the final estimator is obtained by plugging these median-of-means (MOM)
estimators into equation \eqref{eq:EstByTests}. In other words, we select $\hat{f}_{\widehat{m}}$, where
\begin{equation}\label{def:MinmaxMOMSelect}
 \hat{m}:=\argmin_{m\in \mathcal{M}}\max_{m'\in \mathcal{M}}\mathcal{T}(m,m').  
\end{equation}Thanks to the median-of-mean operator, the risk of the selected
estimator $\hat{f}_{\hat{m}}$ is expected to be close to the risk of
the best estimator  $\hat{f}_{m_o}$, even for heavy-tailed and corrupted data because $\mathcal{T}(m,m')$ is a robust (to outliers) sub-gaussian estimator of $\gamma(\hat{f}_m)-\gamma(\hat{f}_{m'})$,
even for heavy-tailed data \cite{LerOli2011}.

Median-of-means have been introduced in \cite{MR1688610, MR855970, MR702836}. 
Median-of-means pairwise comparisons have been used to build robust estimators in \cite{LugosiMendelson2016,MOM1}.
Minmax strategies have been used in \cite{MR2906886,MOM2} for least-squares regression and in \cite{BaraudBirgeSart} for density estimation. 
Finally, the minmax principle has been used for (non-robust) selection of estimators in \cite{goldenshluger:lepski:2008}.
\begin{remark}[Minmax-MOM selection to divide-and-conquer]
\label{sub:the_minmax_mom_selection_procedure_as_a_divide_and_conquer_methodology}
It is classical to use divide-and-conquer approaches \cite{Jor:2013} to deal with large databases: the database is divided in small batches, algorithms are run on each batch and the results are ``aggregated''. 
Minmax-MOM selection procedure \eqref{def:MinmaxMOMSelect} can perform
this king of aggregation.  Denote by $B_{m}$ the block of data hosted
on server $m\in \mathcal{M}$. Train estimators $\hat f_m$ for all
$m\in\cM$. Then, for all $m,m'\in\cM$, compute the $V$ real numbers
$P_{T_v^{(m,m')}}\big[ \gamma(\hat{f}_m)-\gamma(\hat{f}_{m'}) \big],\
v\in [V]$ and take their median. Then compute the minmax-MOM estimator
(if there are too many medians, choose $m$ and $m'$ at random in $\mathcal{M}$).  Following the map-reduce terminology \cite{dean2008mapreduce}, the mapper is the training of the procedure itself and the $V$ evaluations.  The reducer is the computation of the $\binom{\left| \mathcal{M} \right| }{2}$ medians of differences of empirical risks and the minmax-MOM selection \eqref{def:MinmaxMOMSelect}.
\end{remark}
\begin{theorem}[Robust oracle inequality]
\label{thm:oracle-inequality}
Grant Assumption~\ref{Ass:L4L2} and assume $V\in \left\llbracket 3\left| \mathcal{O} \right| ,N/8 \right\rrbracket$. Then with
probability larger than $1-\left| \mathcal{M} \right|^2e^{-V/48}$, the
estimator $\hat{f}_{\widehat{m}}$, where $\hat{m}$ is selected by the
minmax-MOM selection procedure \eqref{def:MinmaxMOMSelect} satisfies, for all
$\varepsilon>0$,
\[ (1-a_{\varepsilon,V})\, \ell(\hat{f}_{\widehat{m}})\leqslant (1+3a_{\varepsilon,V})\, \min_{m\in \mathcal{M}}\ell(\hat{f}_{m})+2b_{\varepsilon,V}, \]
where $f\mapsto \ell(f) = R(f) - R(f^*)$ is the excess loss function, $a_{\varepsilon,V}:=8\chi^2\sqrt{2V/N} + 2\sqrt{2}\varepsilon$ and $b_{\varepsilon,V}:=(64V\sigma^2)/N \varepsilon$.
\end{theorem}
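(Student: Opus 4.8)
The plan is to reduce the statement to a uniform, per-pair control of the median-of-means comparisons $\mathcal{T}(m,m')$ around their targets $\ell(\hat f_m)-\ell(\hat f_{m'})$, and then feed this control into the minmax definition of $\hat m$. First I would condition on the training data $(X_i,Y_i)_{i\in B_m\cup B_{m'}}$: since the informative sample is i.i.d.\ and the test blocks $T_v^{(m,m')}$ are disjoint from $B_m\cup B_{m'}$, the estimators $\hat f_m,\hat f_{m'}$ become fixed elements of $F$ while the data on each \emph{clean} block (one containing no outlier) remain i.i.d.\ $P$ and independent of the conditioning. Writing $g=\gamma(\hat f_m)-\gamma(\hat f_{m'})$, I would expand both losses around the oracle and obtain $g = 2(Y-f^*)(\hat f_{m'}-\hat f_m) + (\hat f_m-f^*)^2-(\hat f_{m'}-f^*)^2$. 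The first-order optimality of $f^*$ over the linear space $F$, namely $P[(Y-f^*)h]=0$ for all $h\in F$, kills the expectation of the multiplier part, so that $P[g]=\ell(\hat f_m)-\ell(\hat f_{m'})$.

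Next I would control, on a clean block, the deviation $P_{T_v}[g]-P[g]$ term by term via Chebyshev's inequality, using $|T_v^{(m,m')}|\geq N/4V$. For the quadratic part, Assumption~\ref{Ass:L4L2} gives $\var((\hat f_m-f^*)^2)\leq P(\hat f_m-f^*)^4\leq \chi^4\ell(\hat f_m)^2$, so a block-wise deviation of order $\chi^2\ell(\hat f_m)\sqrt{V/N}$ fails with a small fixed probability; calibrating the threshold so that this Chebyshev probability equals $1/32$ reproduces exactly the coefficient $8\chi^2\sqrt{2V/N}$. For the multiplier part, the bound $(\hat f_{m'}-\hat f_m)^2\leq 2(\hat f_m-f^*)^2+2(\hat f_{m'}-f^*)^2$ combined with the second half of Assumption~\ref{Ass:L4L2} gives variance at most $8\sigma^2(\ell(\hat f_m)+\ell(\hat f_{m'}))$, hence a deviation of order $\sigma\sqrt{V(\ell(\hat f_m)+\ell(\hat f_{m'}))/N}$. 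Young's inequality $\sqrt{uw}\leq \tfrac{\lambda}{2}u+\tfrac{1}{2\lambda}w$ then splits this into a multiplicative part $2\sqrt 2\,\varepsilon(\ell(\hat f_m)+\ell(\hat f_{m'}))$ and an additive part of order $V\sigma^2/(N\varepsilon)$, which is precisely how $a_{\varepsilon,V}$ and $b_{\varepsilon,V}$ arise. Summing the three contributions, a clean block satisfies $|P_{T_v}[g]-P[g]|\leq a_{\varepsilon,V}(\ell(\hat f_m)+\ell(\hat f_{m'}))+b_{\varepsilon,V}$ outside a small-probability event.

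Then I would pass from blocks to the median. Since $V\geq 3|\mathcal{O}|$, at most $|\mathcal{O}|\leq V/3$ blocks contain an outlier, so a strict majority of blocks is clean. A Chernoff/Hoeffding bound on the number of clean blocks whose empirical mean deviates (these counts are independent across the disjoint blocks) shows that, with probability at least $1-e^{-V/48}$, strictly more than half of all $V$ blocks obey the two-sided deviation bound; the median $\mathcal{T}(m,m')$ therefore lies within $a_{\varepsilon,V}(\ell(\hat f_m)+\ell(\hat f_{m'}))+b_{\varepsilon,V}$ of $\ell(\hat f_m)-\ell(\hat f_{m'})$. A union bound over the at most $|\mathcal{M}|^2$ pairs produces the global event of probability $1-|\mathcal{M}|^2e^{-V/48}$.

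Finally, on this event I would assemble the oracle inequality deterministically. Let $m^*\in\argmin_m \ell(\hat f_m)$. Using the upper deviation bound and $a_{\varepsilon,V}<1$, the map $m'\mapsto (1+a_{\varepsilon,V})\ell(\hat f_{m^*})-(1-a_{\varepsilon,V})\ell(\hat f_{m'})$ is maximized at $m'=m^*$, giving $\max_{m'}\mathcal{T}(m^*,m')\leq 2a_{\varepsilon,V}\ell(\hat f_{m^*})+b_{\varepsilon,V}$; the lower bound gives $\mathcal{T}(\hat m,m^*)\geq (1-a_{\varepsilon,V})\ell(\hat f_{\hat m})-(1+a_{\varepsilon,V})\ell(\hat f_{m^*})-b_{\varepsilon,V}$. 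Since $\hat m$ minimizes $\max_{m'}\mathcal{T}(\cdot,m')$ we have $\mathcal{T}(\hat m,m^*)\leq \max_{m'}\mathcal{T}(\hat m,m')\leq \max_{m'}\mathcal{T}(m^*,m')$, and chaining these three inequalities and rearranging yields $(1-a_{\varepsilon,V})\ell(\hat f_{\hat m})\leq (1+3a_{\varepsilon,V})\min_m\ell(\hat f_m)+2b_{\varepsilon,V}$. I expect the main obstacle to be steps two and three: making the per-block Chebyshev probabilities small enough that, after the budget $V\geq 3|\mathcal{O}|$ erodes the block majority, the surviving clean blocks still force the median inside the bound, while simultaneously matching the multiplicative/additive split to the exact constants $a_{\varepsilon,V},b_{\varepsilon,V}$ through the choice of $\lambda$ in Young's inequality.
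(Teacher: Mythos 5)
Your proposal is correct and follows essentially the same route as the paper's proof: the quadratic/multiplier decomposition of $\gamma(\hat f_m)-\gamma(\hat f_{m'})$ with variance bounds from Assumption~\ref{Ass:L4L2}, per-block Chebyshev conditionally on the training data, a Hoeffding count of good clean blocks (using $V\geqslant 3\left|\mathcal{O}\right|$) to force the median inside the deviation band, a union bound over the $\left|\mathcal{M}\right|^2$ pairs, and the identical minmax chaining through $m_o$ (this is exactly Lemmas~\ref{lm:conditional-variance}--\ref{lm:encadrement-T-2} and Section~\ref{sec:ProofMainThm}). Two cosmetic points only: the paper calibrates a single Chebyshev at failure probability $1/8$ against the combined variance bound $C_{m,m'}$, whereas your per-term calibration at $1/32$ actually yields $16\chi^2\sqrt{V/N}$ rather than exactly $8\chi^2\sqrt{2V/N}$ (failure probability $1/16$ per term would match); and to obtain the conclusion simultaneously for all $\varepsilon>0$ on a single event, you should, as the paper does, define the event by the $\varepsilon$-free threshold $\sqrt{8\,C_{m,m'}}$ and apply the Young split deterministically afterwards, rather than building $\varepsilon$ into the Chebyshev threshold.
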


The proof of Theorem~\ref{thm:oracle-inequality} is postponed to
Section~\ref{sec:ProofMainThm}. Roughly speaking,
Theorem~\ref{thm:oracle-inequality} states that, with exponentially
large probability, the selected estimator \eqref{def:MinmaxMOMSelect}
has the excess risk of the best estimator in the collection
$(\hat{f}_m)_{m\in \mathcal{M}}$.  Following \cite{MR1311089}, this
result is called an oracle inequality.  We call it \emph{robust} as it
holds under moment assumptions on the linear space $F$ (see
Assumption~\ref{Ass:L4L2}) and for a dataset that may contain
outliers.  The residual term $b_{\varepsilon, V}$ is of order $V/N$.
If $\log |\cM|\gtrsim |\cO|$ and $V\asymp \log|\cM|$, the residual
term is of order $\log |\cM|/N$, which is minimax optimal
\cite{TsyCOLT07}.  The oracle inequality is interesting when
$a_{\varepsilon, V}<1$ which holds if $\chi\lesssim \sqrt{N/V}$.  The
``constant'' $\chi$ in Assumption~\ref{Ass:L4L2} may therefore grow
with the dimension of $F$ as in the examples of
\cite{saumard2018optimality} without breaking the results.


\section{An ensemble method to induce robustness, subsampling and hyperparameters tuning}\label{sec:Subsampling}
In this section, we define an \emph{ensemble method} which takes one or
several (non necessarily robust) algorithms as input and ouputs an estimator. The method
is robust to the presence of outliers, has subsampling capabilities,
and automatically tune hyperparameters.  The main ideas
behind the construction are: using subsampling as a way of
achieving robustness (by avoiding outliers), viewing the choice of the
subsample as a hyperparameter to be tuned, and using the robust
selection procedure from Section~\ref{sec:estimator-selection} to
select the final estimator.  Performance guaranties are established in Corollary~\ref{Cor:AppliSelSousEns}.

\subsection{Definition of the method}
\label{sec:defin-ensemble-meth}

Let $(G_{\lambda})_{\lambda\in \Lambda}$ be a finite collection of
learning algorithms which outputs estimators in $F$. The collection
may in fact correspond to a single algorithm with several combinations
of hyperparameters values, or even several different algorithms with
several combinations of hyperparameters values.

We now construct the set $\mathcal{B}$ of subsamples to be considered
by the method.  Assume
$N\geqslant 8$.  Let
$K_{\text{min}}$ and $K_{\text{max}}$ integers such that
$3\leqslant K_{\text{min}}\leqslant K_{\text{max}}\leqslant \log_2N$
(these parameters will specify the subsamples cardinality range).  For
each
$K\in \left\llbracket K_{\text{min}},\ K_{\text{max}}\right\rrbracket
$, consider a partition $(B_k^{(K)})_{k\in [2^K]}$ of $[N]$ such that
for all $k\in [2^K]$,
$\left\lfloor N / 2^K \right\rfloor \leqslant \left| B_k^{(K)}
\right|$. We call $(B_k^{(K)})_{k\in [2^K]}$ \emph{the $2^K$-partition}.
Let
\begin{equation}\label{eq:model_class}
\mathcal{B}=\bigcup_{K=K_{\text{min}}}^{K_{\text{max}}}\bigcup_{k\in [2^K]}\left\{
  B_k^{(K)}\right\}\quad \text{and}\quad  \cM = \Lambda\times \mathcal{B} .
\end{equation} From $K_{\text{min}}\geqslant 3$ we can easily deduce
that each subsample in $\cB$ has cardinality less than $N/4$.  
Then, as in Section~\ref{sec:estimator-selection}, for each
$m=(\lambda,B)$, let $\hat{f}_m$ be the estimator trained by
algorithm $G_{\lambda}$ using subsample $B$, in other words: $\hat{f}_m=G_{\lambda}((X_i,Y_i)_{i\in B})$.
Let $V\in \left\llbracket 3,N/8 \right\rrbracket$. For each couple
$(m,m')\in \mathcal{M}^2$, let $(T_v^{(m,m')})_{v\in [V]}$ be a
partition of a subset of $[N]\setminus(B_m\cup B_{m'})$ such that
$\left| T_v^{(m,m')} \right| \geqslant N/4V$ for all $v\in [V]$.
Then, using the minmax-MOM selection procedure \eqref{def:MinmaxMOMSelect} from
Section~\ref{sec:estimator-selection}, we select from collection
$(\hat{f}_m)_{m\in \cM}$ the final estimator $\hat{f}_{\hat{m}}$.

\subsection{Theoretical guarantees}
The following result states that if risk bounds hold for
algorithms $(G_{\lambda})_{\lambda\in \Lambda}$ in a context with
no-outlier, then $\hat{f}_{\widehat{m}}$ essentially satisfies the
best of those risk bounds, even in the presence of outliers.
\begin{corollary}\label{Cor:AppliSelSousEns} Let $\cM$ be defined by \eqref{eq:model_class}. Grant Assumption~\ref{Ass:L4L2}. 
  Let $\rho:\Lambda\times\mathbb{N}^*\to \mathbb{R}_+\cup \left\{ +\infty \right\}$ be a
  non-increasing function in its second variable and
  $\nu:\Lambda\to \mathbb{R}_+^*$.  Denote
  $\nu_{\text{max}}:=\lceil \max_{\lambda\in \Lambda}\nu(\lambda)
  \rceil$.  Assume that $N\geqslant \nu_{\text{max}} \max_{}(8V,2^{K_{\text{min}}+1})$ and
  $V\in \left\llbracket 3\left| \mathcal{O} \right|, 2^{K_{\text{max}}-1}\right\rrbracket$.  Assume that, for all
  $\lambda\in \Lambda$ and $B\subset \mathcal{I}$ such that
  $\left| B \right| \geqslant \nu(\lambda)$, it holds that
  $\ell(\hat{f}_{\lambda,B})\leqslant \rho(\lambda,\left| B \right|)$
  with probability larger than $1-\exp(-1/48)$. Then for all $\varepsilon>0$, the estimator
  $\hat{f}_{\widehat{m}}$ defined in \eqref{def:MinmaxMOMSelect}
  satisfies
 \begin{equation}\label{eq:residue_rate_coro}
 (1-a_{\varepsilon,V})\ell(\hat{f}_{\widehat{m}})\leqslant
  (1+3a_{\varepsilon,V})\, \min_{\lambda\in \Lambda}\rho\left(   \lambda,\left\lfloor  \frac{N}{\max_{}(4V,2^{K_{\text{min}}})} \right\rfloor\right)+2b_{\varepsilon,V}
 \end{equation}
with probability larger than
\begin{equation}\label{eq:proba_dev_coro}
1-(\left| \Lambda \right|^2N^2+1)e^{-V/48}.
\end{equation}
\end{corollary}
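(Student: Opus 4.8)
The plan is to derive Corollary~\ref{Cor:AppliSelSousEns} from Theorem~\ref{thm:oracle-inequality}, whose conclusion already controls $\ell(\hat f_{\widehat m})$ by the oracle quantity $\min_{m\in\cM}\ell(\hat f_m)$ up to the factors $a_{\varepsilon,V},b_{\varepsilon,V}$. The only task left is to bound this oracle quantity by $\min_{\lambda\in\Lambda}\rho\bigl(\lambda,\lfloor N/\max(4V,2^{K_{\text{min}}})\rfloor\bigr)$, which I would do by exhibiting a single outlier-free subsample in $\cB$ that is large enough to activate the assumed risk bound for every algorithm at once.

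Write $M:=\max(4V,2^{K_{\text{min}}})$ and set the dyadic level $K^\star:=\min\bigl(K_{\text{max}},\lfloor\log_2 M\rfloor\bigr)$. Using the two-sided bound $3\left|\cO\right|\leqslant V\leqslant 2^{K_{\text{max}}-1}$ together with $K_{\text{min}}\leqslant\log_2 M$, I would check that $K_{\text{min}}\leqslant K^\star\leqslant K_{\text{max}}$ and that $\left|\cO\right|<2^{K^\star}\leqslant M$. The inequality $2^{K^\star}\leqslant M$ forces each block of the $2^{K^\star}$-partition to have cardinality at least $\lfloor N/2^{K^\star}\rfloor\geqslant\lfloor N/M\rfloor$, while $\left|\cO\right|<2^{K^\star}$ and the pigeonhole principle (the outliers touch at most $\left|\cO\right|$ of the $2^{K^\star}$ blocks) produce at least one block $B^\star$ with $B^\star\subset\cI$. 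Finally $N\geqslant\nu_{\text{max}}\max(8V,2^{K_{\text{min}}+1})=2\nu_{\text{max}}M$ gives $\left|B^\star\right|\geqslant\lfloor N/M\rfloor\geqslant 2\nu_{\text{max}}\geqslant\nu(\lambda)$ for every $\lambda\in\Lambda$, so $B^\star$ is one clean subsample admissible for all algorithms simultaneously.

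Let $\lambda^\star:=\argmin_{\lambda\in\Lambda}\rho\bigl(\lambda,\lfloor N/M\rfloor\bigr)$; both $\lambda^\star$ and $B^\star$ are deterministic, the latter because it depends only on the fixed partition $\cO\sqcup\cI$. Applying the assumed risk bound to the single pair $(\lambda^\star,B^\star)$ and using that $\rho(\lambda^\star,\cdot)$ is non-increasing with $\left|B^\star\right|\geqslant\lfloor N/M\rfloor$, I obtain
\[
  \min_{m\in\cM}\ell(\hat f_m)\leqslant\ell(\hat f_{\lambda^\star,B^\star})\leqslant\rho\bigl(\lambda^\star,\left|B^\star\right|\bigr)\leqslant\rho\bigl(\lambda^\star,\lfloor N/M\rfloor\bigr)=\min_{\lambda\in\Lambda}\rho\bigl(\lambda,\lfloor N/M\rfloor\bigr)
\]
off an event whose complement is the additive ``$+1$'' in \eqref{eq:proba_dev_coro}. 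I would then invoke Theorem~\ref{thm:oracle-inequality}, whose hypotheses hold here since Assumption~\ref{Ass:L4L2} is granted and $V\in\llbracket 3\left|\cO\right|,N/8\rrbracket$ (indeed $N\geqslant 8V\nu_{\text{max}}\geqslant 8V$). Intersecting its event with the one above and substituting the displayed bound into its oracle inequality yields \eqref{eq:residue_rate_coro}. For the probability \eqref{eq:proba_dev_coro}, a union bound adds the theorem's failure probability $\left|\cM\right|^2e^{-V/48}$, controlled through $\left|\cM\right|=\left|\Lambda\right|\left|\cB\right|$ with $\left|\cB\right|=\sum_{K=K_{\text{min}}}^{K_{\text{max}}}2^K\leqslant 2^{K_{\text{max}}+1}\leqslant 2N$, to the single risk-bound failure.

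The delicate step is the combinatorial core: picking $K^\star$ inside $\llbracket K_{\text{min}},K_{\text{max}}\rrbracket$ so that one dyadic partition is simultaneously fine enough to leave a block disjoint from $\cO$ and coarse enough that its blocks exceed $\lfloor N/M\rfloor$. The boundary case $V=2^{K_{\text{max}}-1}$, where $\lfloor\log_2 M\rfloor$ may equal $K_{\text{max}}+1$ and the truncation at $K_{\text{max}}$ becomes active, is where the constraints are tightest and must be verified by hand.
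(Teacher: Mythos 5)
There is a genuine gap at the step you flag as producing the additive ``$+1$'': your single clean block $B^\star$ cannot deliver it. The hypothesis is that each risk bound $\ell(\hat f_{\lambda,B})\leqslant\rho(\lambda,|B|)$ holds with probability larger than $1-\exp(-1/48)$, and $\exp(-1/48)\approx 0.98$, so this is a \emph{constant} (roughly $2\%$) probability guarantee --- the paper stresses this point right after the corollary. Applying it once, to the single pair $(\lambda^\star,B^\star)$, leaves a failure event of probability up to $e^{-1/48}\approx 0.98$, not $e^{-V/48}$; your final bound would then hold with probability at least $1-4|\Lambda|^2N^2e^{-V/48}-e^{-1/48}$, which is vacuous. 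The whole point of the corollary, and the idea your proposal is missing, is probability amplification: the paper takes $K_0:=\max(\lceil\log_2(2V)\rceil,K_{\text{min}})$, so that $2V\leqslant 2^{K_0}\leqslant\max(4V,2^{K_{\text{min}}})$, and observes that the set $\cK_0$ of blocks of the $2^{K_0}$-partition contained in $\cI$ has cardinality at least $2^{K_0}-|\cO|\geqslant 2V-V/3\geqslant V$. These blocks are \emph{disjoint} subsets of $\cI$, hence the estimators $\hat f_{\lambda_0,B_k^{(K_0)}}$, $k\in\cK_0$ (all trained with the same minimizing $\lambda_0$), are independent, and the probability that \emph{every} one of them violates the bound $\rho_0$ factorizes into a product bounded by $\exp(-|\cK_0|/48)\leqslant\exp(-V/48)$. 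That is the $+1$ term in \eqref{eq:proba_dev_coro}, and it is exactly the mechanism by which the ensemble method converts constant-probability guarantees (e.g.\ for the LASSO in Corollary~\ref{coro:minmax_MOM_LASSO}) into exponential ones.

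The rest of your argument is sound and parallels the paper: choosing a dyadic level in $\left\llbracket K_{\text{min}},K_{\text{max}}\right\rrbracket$ via $V\leqslant 2^{K_{\text{max}}-1}$, using monotonicity of $\rho$ together with $2^{K_0}\leqslant\max(4V,2^{K_{\text{min}}})$ to pass to $\rho(\lambda,\lfloor N/\max(4V,2^{K_{\text{min}}})\rfloor)$, verifying $|B|\geqslant\lfloor N/2^{K_0}\rfloor\geqslant\nu_{\text{max}}$ from $N\geqslant\nu_{\text{max}}\max(8V,2^{K_{\text{min}}+1})$, and combining with Theorem~\ref{thm:oracle-inequality} by a union bound (where your estimate $|\cB|\leqslant 2N$ gives the same shape of bound up to a constant). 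To repair the proof, replace the pigeonhole extraction of one clean block by the event $\Omega_2=\{\exists k\in\cK_0:\ \ell(\hat f_{\lambda_0,B_k^{(K_0)}})\leqslant\rho_0\}$ and bound $\mathbb{P}[\Omega_2^c]$ by independence across the disjoint clean blocks; note also that the needed abundance of clean blocks is why the level must satisfy $2^{K_0}\geqslant 2V$, a constraint your choice $K^\star=\min(K_{\text{max}},\lfloor\log_2 M\rfloor)$ does not track, since you only arranged $2^{K^\star}>|\cO|$.
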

Corollary~\ref{Cor:AppliSelSousEns} is proved in
Section~\ref{sec:ProofCorSubSel}. 
Let us stress some important aspects.

Estimators $\hat{f}_{\lambda,B}$ for $(\lambda, B)\in\cM$ are assumed
to satisfy an excess risk bound with rates
$\rho(\lambda,\left| B \right|)$ only with constant probability (the
constant $1-\exp(-1/48)$ chosen here has nothing special), when $B$ is
large enough and only contains informative data.  For example, this
condition is met by ERM when informative data satisfies moment
assumptions, see Propositions~\ref{coro:minmax_MOM_LASSO} and
\ref{prop:GuillaumeShahar}.  With these arguably weak requirement, the
above statement claims that the ensemble method achieves the best bound
among
$\rho(\lambda,\left\lfloor N/\max_{}(4V,2^{K_{\text{min}}})
\right\rfloor ),\ \lambda\in \Lambda$ with exponentially large
probability \eqref{eq:proba_dev_coro}.

        The upper bound $\rho(\lambda,\left| B \right| )$ on the
        excess risk of $\hat{f}_{\lambda,B}$ depends on $\lambda$ and
        the size $\left| B \right| $ of the subsample. It improves
        with the sample size by the monotonicity assumption on $\rho$.

Finally, the function $\lambda\mapsto \nu(\lambda)$ is introduced to handle situations where the risk bound holds only when the sample size is larger than $\nu(\lambda)$.



 \subsection{An efficient partition scheme of the dataset}
 \label{sec:an-effic-constr}
 The partitions $(B_k^{(K)})_{k\in \left[ 2^K \right]}$
 ($K\in \left\llbracket K_{\text{min}},\ K_{\text{max}}
 \right\rrbracket$) and $(T_v^{(m,m')})_{v\in [V]}$ (for
 $(m,m')\in \mathcal{M}^2$) can be constructed in many different ways.
 This section presents a specific choice for those partitions
  which yields a computational advantage by
 significantly reducing the number of empirical risks
 $P_{T_v^{(m,m')}}[\gamma(\hat{f}_m)]$ to be computed (by making many
 of them redundant). This complexity reduction makes the computations
 from Section~\ref{sec:numer-exper} possible in a reasonable amount of time.

 The minimax-MOM selection procedure \eqref{def:MinmaxMOMSelect}
 requires, for all $(m,m')\in \mathcal{M}^2$ and $v\in [V]$, the
 computation of $P_{T_v^{(m,m')}}\big[ \gamma(\hat{f}_m) \big]$ and
 $P_{T_v^{(m,m')}}\big[ \gamma(\hat{f}_{m'}) \big]$.  Since the
 partition $(T_v^{(m,m')})_{v\in [V]}$ may be different for each couple
 $(m,m')\in \mathcal{M}^2$, this requires, in the worst case, the
 computation of $V\left| \mathcal{M} \right|^2$ empirical risks.
By comparison, the
construction presented here will only require the computation of $8V|\cM|/3$ empirical risks.

For $K\in \left\llbracket 3,\ \lfloor\log_2N\rfloor
\right\rrbracket$ and $k\in [2^K]$, define $B_k^{(K)}:=\left\rrbracket \left\lfloor \frac{(k-1)N}{2^K} \right\rfloor
    ,\ \left\lfloor \frac{kN}{2^K} \right\rfloor \right\rrbracket$.
For each $K\in \left\llbracket 3,\ \lfloor\log_2N\rfloor \right\rrbracket$, $(B_k^{(K)})_{k\in [2^K]}$ is a partition of $[N]$ such that, for
each $k\in [2^K]$, $\lfloor N/2^K\rfloor\leqslant \big| B_k^{(K)}
\big| \leqslant N/4$, as required. Moreover,
the following key property holds.
\begin{lemma}\label{lem:key1}
 Let $3\leqslant K'\leqslant K\leqslant \lfloor\log_2N\rfloor$.

\begin{enumerate}[(i)]
\item\label{item:2} For all $k\in [2^K]$, $B_k^{(K)}\subset B^{(K')}_{\lfloor(k-1)2^{K'-K}\rfloor +1}$.
  
\item\label{item:3} For all $k'\in [2^{K'}]$, $(B_k^{(K)})_{k\in \left\rrbracket       (k'-1)2^{K-K'},\ k'2^{K-K'} \right\rrbracket}$ is a partition
  of $B_{k'}^{(K')}$.
\end{enumerate}
\end{lemma}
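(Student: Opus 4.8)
The whole statement is a bookkeeping fact about nested dyadic partitions of $\{1,\dots,N\}$, so the plan is to reduce everything to two elementary ingredients: the monotonicity of the floor function, and the exact rescaling identity obtained by writing $d:=K-K'\geq 0$ and noting that $2^K/2^{K'}=2^d$, whence
\[ \frac{mN}{2^{K'}} = \frac{m\,2^d N}{2^K}\qquad\text{for every integer } m. \]
Since the two sides are equal real numbers their floors agree, and this is precisely what will let me pass between the coarse scale $K'$ and the fine scale $K$ without accumulating any floor mismatch at the shared endpoints.

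First, for part (i), I would set $j:=\lfloor (k-1)2^{-d}\rfloor+1$ (the index appearing in the statement) and extract from the definition of the floor the two integer inequalities $(j-1)2^d\leq k-1$ and $k\leq j\,2^d$. The inclusion $B^{(K)}_k\subset B^{(K')}_j$ then amounts to comparing the endpoints of the half-open integer intervals: applying monotonicity of the floor to $(j-1)2^d\leq k-1$ together with the displayed identity shows that the excluded lower endpoint $\lfloor (j-1)N/2^{K'}\rfloor$ of $B^{(K')}_j$ does not exceed the lower endpoint $\lfloor (k-1)N/2^K\rfloor$ of $B^{(K)}_k$, while $k\leq j\,2^d$ gives $\lfloor kN/2^K\rfloor\leq\lfloor jN/2^{K'}\rfloor$, i.e.\ the upper endpoint of $B^{(K)}_k$ does not exceed that of $B^{(K')}_j$. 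These two inequalities together yield the inclusion.

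For part (ii), I would first dispose of disjointness for free: the family $(B^{(K)}_k)_{k\in[2^K]}$ is already a partition of $[N]$ (the intervals are consecutive, they begin at $1$ and end at $N$, and the upper endpoint of block $k$ coincides with the excluded lower endpoint of block $k+1$), so any subfamily is automatically pairwise disjoint. It then remains to identify the union. Since the indices $k\in\,\rrbracket (k'-1)2^d,\,k'2^d\rrbracket$ label consecutive intervals, their union is a single interval whose lower endpoint is $\lfloor (k'-1)2^d\,N/2^K\rfloor$ and whose upper endpoint is $\lfloor k'2^d\,N/2^K\rfloor$; by the displayed identity these equal $\lfloor (k'-1)N/2^{K'}\rfloor$ and $\lfloor k'N/2^{K'}\rfloor$ respectively, which are exactly the endpoints of $B^{(K')}_{k'}$. (Consistency with part (i) is visible here, since for such $k$ one checks $\lfloor (k-1)2^{-d}\rfloor+1=k'$.)

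I do not expect a genuine obstacle: the result is pure floor arithmetic on dyadic indices. The only point requiring care is that $N/2^K$ need not be an integer, so the argument cannot invoke exact divisibility and must instead route every endpoint comparison through monotonicity of the floor together with the exact identity $mN/2^{K'}=m\,2^dN/2^K$, which is what makes the shared endpoints match on the nose.
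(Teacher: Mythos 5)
Your proof is correct, and it compares with the paper's as follows. For part (i) the two arguments are essentially the same: both reduce the inclusion to endpoint comparisons through the rescaling identity $mN/2^{K'}=m2^{K-K'}N/2^K$ and monotonicity of the floor. Your pair of integer inequalities $(j-1)2^{d}\leqslant k-1$ and $k\leqslant j2^{d}$ is in fact the clean form of what the paper's displayed chain garbles: as printed, the paper inserts a spurious $+2$ and its intermediate comparison $\lfloor((k-1)2^{K'-K}+2)N/2^{K'}\rfloor\leqslant\lfloor(\lfloor(k-1)2^{K'-K}\rfloor+1)N/2^{K'}\rfloor$ is false (the correct glue is that the fractional part of $(k-1)2^{K'-K}$ is at most $1-2^{K'-K}$, which is exactly your $k\leqslant j2^{d}$), so your write-up actually repairs a flawed step while following the same strategy. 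For part (ii) you take a genuinely different and shorter route: the paper argues element-wise, taking $i\in B_{k'}^{(K')}$, invoking the fact that $(B_k^{(K)})_{k\in[2^K]}$ partitions $[N]$ to locate the unique fine block containing $i$, deducing that its index lies in $\rrbracket(k'-1)2^{K-K'},\,k'2^{K-K'}\rrbracket$, and then reusing part (i) for the reverse inclusion $B_k^{(K)}\subset B_{k'}^{(K')}$; you instead telescope the consecutive half-open blocks, so that their union over the index range is the single interval with endpoints $\lfloor(k'-1)2^{d}N/2^K\rfloor$ and $\lfloor k'2^{d}N/2^K\rfloor$, which coincide exactly with those of $B_{k'}^{(K')}$ because $(k'-1)2^{d}$ and $k'2^{d}$ are integers so no floor mismatch can occur --- the key point you rightly flag. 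Your version makes (ii) independent of (i) and avoids the element chase; the paper's version yields as a by-product the explicit index correspondence $k\mapsto\lfloor(k-1)2^{K'-K}\rfloor+1=k'$, which you recover anyway in your parenthetical consistency check. Disjointness is handled identically in both (a subfamily of a global partition). Both proofs are complete.
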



Let
\begin{equation}
\label{eq:2}
K_0:=\lceil \log_2(V/3)\rceil+2.
\end{equation}
For all $K_1,K_2\in \left\llbracket 3,\ \lfloor\log_2N\rfloor \right\rrbracket$ and $k_1\in \left[ 2^{K_1} \right]$, $k_2\in
\left[ 2^{K_2} \right]$, let $\mathcal{K}_0(K_1,k_2,K_2,k_2)$
be the set of indices from the $2^{K_0}$-partition which have
empty intersection with both $B_{k_1}^{(K_1)}$ and $B_{k_1}^{(K_2)}$:
\[   \mathcal{K}_0(K_1,k_1,K_2,k_2)
:=\left\{ k\in \left[ 2^{K_0}
    \right]\,\middle|\,B_k^{(K_0)}\cap (B_{k_1}^{(K_1)}\cup B_{k_2}^{(K_2)})=\varnothing \right\}. \]
\begin{lemma}
  \label{lm:1}
  For all $3\leqslant K_1,K_2\leqslant \lfloor\log_2N\rfloor$ and
  $k_1\in \left[ 2^{K_1} \right]$, $k_2\in \left[ 2^{K_2} \right]$, we have $\left| \mathcal{K}_0(K_1,k_1,K_2,k_2) \right| \geqslant V$.
\end{lemma}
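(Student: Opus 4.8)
The plan is to count how many blocks of the $2^{K_0}$-partition the two intervals $B_{k_1}^{(K_1)}$ and $B_{k_2}^{(K_2)}$ can meet, and to show this number is small enough that at least $V$ blocks of the $2^{K_0}$-partition remain untouched, since those untouched indices are exactly $\mathcal{K}_0(K_1,k_1,K_2,k_2)$. Fix $i\in\{1,2\}$ and consider the indices $k\in[2^{K_0}]$ with $B_k^{(K_0)}\cap B_{k_i}^{(K_i)}\neq\varnothing$.

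First I would dispose of the two regimes using the nesting structure of Lemma~\ref{lem:key1}. If $K_i\geqslant K_0$, then part~\ref{item:2} shows that $B_{k_i}^{(K_i)}$ lies inside a single block of the $2^{K_0}$-partition; as the blocks are pairwise disjoint, $B_{k_i}^{(K_i)}$ meets at most that one block. If instead $K_i\leqslant K_0$, then part~\ref{item:3} shows that $B_{k_i}^{(K_i)}$ is the disjoint union of $2^{K_0-K_i}$ blocks of the $2^{K_0}$-partition, hence meets at most $2^{K_0-K_i}$ of them. In both regimes the number of blocks met is thus at most $2^{\max(K_0-K_i,\,0)}$, and since $K_i\geqslant 3$ this is at most $2^{K_0-3}$.

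Next I would combine the two bounds by a union bound: the number of indices $k\in[2^{K_0}]$ for which $B_k^{(K_0)}$ meets $B_{k_1}^{(K_1)}\cup B_{k_2}^{(K_2)}$ is at most $2\cdot 2^{K_0-3}=2^{K_0-2}$. The complementary set of indices is precisely $\mathcal{K}_0(K_1,k_1,K_2,k_2)$, so
\[
\left| \mathcal{K}_0(K_1,k_1,K_2,k_2) \right|\;\geqslant\; 2^{K_0}-2^{K_0-2}\;=\;3\cdot 2^{K_0-2}.
\]
The definition \eqref{eq:2} of $K_0$ is calibrated exactly for the final step: $2^{K_0-2}=2^{\lceil\log_2(V/3)\rceil}\geqslant V/3$, whence $\left| \mathcal{K}_0(K_1,k_1,K_2,k_2) \right|\geqslant 3\cdot(V/3)=V$.

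The only delicate point is the block-count in the second paragraph: one must read off from the two parts of Lemma~\ref{lem:key1} that an interval from a \emph{finer} partition sits inside one coarse block, whereas an interval from a \emph{coarser} partition splits into exactly $2^{K_0-K_i}$ coarse blocks, and then use the constraint $K_i\geqslant 3$ (together with $K_0\geqslant 3$) to make $2^{K_0-3}$ the uniform per-interval worst case. Everything after that is the union bound and the arithmetic choice of $K_0$, both routine.
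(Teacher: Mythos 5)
Your proof is correct and takes essentially the same route as the paper's: both arguments use Lemma~\ref{lem:key1} to bound by $2^{K_0-3}$ the number of blocks of the $2^{K_0}$-partition meeting each interval $B_{k_i}^{(K_i)}$, then count the complement, $2^{K_0}-2\cdot 2^{K_0-3}=\frac{3}{4}\,2^{K_0}\geqslant V$, using the definition \eqref{eq:2} of $K_0$. The only cosmetic difference is how the per-interval bound is obtained: the paper first embeds each $B_{k_i}^{(K_i)}$ in a single block of the $2^3$-partition via part~(i) and then refines that block into $2^{K_0-3}$ blocks of the $2^{K_0}$-partition via part~(ii), whereas you split directly on whether $K_i\geqslant K_0$ (one block, by part~(i)) or $K_i\leqslant K_0$ (at most $2^{K_0-K_i}$ blocks, by part~(ii)), which yields the same uniform bound.
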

Let $(m,m')\in \mathcal{M}^2$ and $K_1,k_1,K_2,k_2$ be such that $B_m=B_{k_1}^{(K_1)}\quad \text{and}\quad B_{m'}=B_{k_2}^{(K_2)}$.  Then, the collection of sets $(B_k^{(K_0)})_{k\in \mathcal{K}_0(K_1,k_1,K_2,k_2)}$ is a sub-collection of the $2^{K_0}$-partition,
whose sets have empty intersection with both $B_m$ and $B_{m'}$, and which, according to
Lemma~\ref{lm:1}, contains at least $V$ sets.
We can thus define $(T_v^{(m,m')})_{v\in [V]}$ as a sub-collection of size exactly $V$.
Consequently, $(T_v^{(m,m')})_{v\in [V]}$ is indeed a partition of a subset of
$[N]\setminus (B_m\cup B_{m'})$. Moreover, we have the following lower
bound on the cardinality of its sets, which is required (see Section~\ref{sec:estimator-selection}).
\begin{lemma}\label{lem:key3}
  For all $(m,m')\in \mathcal{M}^2$ and $v\in [V]$, $\left| T_v^{(m,m')} \right|\geqslant  N/(4V)$. 
\end{lemma}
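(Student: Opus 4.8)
The plan is to observe that each test block $T_v^{(m,m')}$ is, by construction, exactly one of the blocks $B_k^{(K_0)}$ of the $2^{K_0}$-partition, so the claimed cardinality bound reduces to a lower bound on the size of a single block of that particular partition. First I would recall the block-size estimate already noted above: writing $B_k^{(K)}=\left\rrbracket \lfloor (k-1)N/2^K\rfloor,\ \lfloor kN/2^K\rfloor\right\rrbracket$, superadditivity of the floor gives $|B_k^{(K)}|=\lfloor kN/2^K\rfloor-\lfloor(k-1)N/2^K\rfloor\geq \lfloor N/2^K\rfloor$. Applying this with $K=K_0$ yields $|T_v^{(m,m')}|\geq \lfloor N/2^{K_0}\rfloor$, and it then remains only to show $\lfloor N/2^{K_0}\rfloor\geq N/(4V)$.

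The second step is a direct computation with the defining value $K_0=\lceil\log_2(V/3)\rceil+2$. Since $\lceil\log_2 x\rceil<\log_2 x+1$ for every $x>0$, one has $2^{\lceil\log_2(V/3)\rceil}<2\cdot V/3$, hence $2^{K_0}=4\cdot 2^{\lceil\log_2(V/3)\rceil}<8V/3$. This gives the strict lower bound $N/2^{K_0}>3N/(8V)$, so that $\lfloor N/2^{K_0}\rfloor> N/2^{K_0}-1>3N/(8V)-1$.

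Finally I would invoke the standing constraint $V\leq N/8$, i.e.\ $N\geq 8V$, which is in force throughout Section~\ref{sec:Subsampling}. It gives $N/(8V)\geq 1$, whence $3N/(8V)-1\geq 3N/(8V)-N/(8V)=2N/(8V)=N/(4V)$. Chaining the inequalities produces $\lfloor N/2^{K_0}\rfloor>N/(4V)$, which is even stronger than required.

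The computation is short once the structural identification $T_v^{(m,m')}=B_k^{(K_0)}$ is made; the only delicate point is matching the $\lceil\cdot\rceil$/$\lfloor\cdot\rfloor$ rounding against the target constant, and here the definition $K_0=\lceil\log_2(V/3)\rceil+2$ is evidently tailored so that $2^{K_0}$ sits just below $8V/3$, which combined with $N\geq 8V$ produces precisely the factor $1/(4V)$. I expect no genuine obstacle; the one thing to verify carefully is that the block-size bound $|B_k^{(K_0)}|\geq\lfloor N/2^{K_0}\rfloor$ does not secretly require $K_0$ to lie in the admissible range $\left\llbracket 3,\lfloor\log_2 N\rfloor\right\rrbracket$ (it can fail to, e.g.\ when $V=3$ forces $K_0=2$), since that bound is a purely arithmetic property of the interval splitting and holds for every value of $K_0$.
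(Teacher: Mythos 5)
Your proof is correct and takes essentially the same route as the paper's: identify $T_v^{(m,m')}$ with a block $B_k^{(K_0)}$ of the $2^{K_0}$-partition, use $2^{K_0}\leqslant 8V/3$ coming from the definition $K_0=\lceil\log_2(V/3)\rceil+2$, and invoke $V\leqslant N/8$ to absorb the rounding, your $\lfloor x\rfloor > x-1$ step being just a rearrangement of the paper's floor computation $\left\lfloor 3N/8V\right\rfloor=\left\lfloor N/4V+N/8V\right\rfloor\geqslant\left\lfloor N/4V+1\right\rfloor\geqslant N/4V$. Your closing remark that the block-size bound $\bigl|B_k^{(K_0)}\bigr|\geqslant\lfloor N/2^{K_0}\rfloor$ is purely arithmetic and does not need $K_0\in\left\llbracket 3,\lfloor\log_2 N\rfloor\right\rrbracket$ (e.g.\ $V=3$ gives $K_0=2$) is a legitimate point of care that the paper passes over silently.
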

Consequentely, to compute the minimax-MOM selection procedure in the
context of the ensemble method defined in
Section~\ref{sec:defin-ensemble-meth}, the empirical risk of each
estimator $\hat{f}_m$ has to be computed on the $2^{K_0}$-partition
only, which thanks to \eqref{eq:2} means the computation of at most
$8V|\cM|/3$ empirical risks, as advertised.

\section{Application to fine-tuning the regularization parameter of the LASSO}
This section applies the ensemble method from
Section~\ref{sec:Subsampling} with the LASSO as input algorithm.
\label{sec:applications}
Consider here $\mathbb{X}=\mathbb{R}^d$, and denote
$\hat \beta_{\lambda, B}$ the LASSO estimator trained with
regularization parameter $\lambda$ and subsample $B$.
\begin{equation}
\label{eq:lasso}
\hat{\beta}_{\lambda,B}=\argmin_{\beta\in \mathbb{R}^d}\left\{ \frac{1}{\left| B \right| }\sum_{i\in B}^{}(Y_i-\left< \beta ,X_i \right> )^2+\lambda\left\| \beta \right\|_1 \right\}. 
\end{equation}
 Statistical guarantees for the LASSO, which we recall below, have been obtained in Theorem~1.4 in \cite{LMregcomp} with
a regularization parameter
$\lambda\asymp\norm{\zeta}_{L_q}\sqrt{s \log(ed/s)/N}$ instead of
$\norm{\zeta}_{L_q}\sqrt{s \log(ed)/N}$. This choice is valid under
the following assumption.

\begin{Assumption}\label{ass:lasso}
Let $(X,Y)\sim P$. For all $t\in\R^d, \E \inr{X, t}^2=\norm{t}_2^2$ and there exists $L>0$ such that, for all $p\geq1$ and $t\in\R^d, (\E|\inr{X,t}|^p)^{1/p}\leq L \norm{t}_2$. Moreover, there exists $q>2$ such that, for any $\beta^*\in\argmin_{\beta\in\R^d}\E(Y-\inr{X, \beta})^2$, $\zeta:=Y-\inr{X, \beta^*}\in L_q$.  
\end{Assumption}

\begin{proposition}\label{prop:lasso_basis_estimators}
Grant Assumption~\ref{ass:lasso}.  Assume that $\beta^*$ is $s_0$-sparse for some $s_0\in [d]$.  Let $B\subset \cI$ be such that $|B|\geq s_0 \log(ed/s_0)$. Then, there exist absolute constants $c_0$ and $c_1$ such that the LASSO with regularization parameter $\lambda = c_0\norm{\zeta}_{L_q}\sqrt{s_0 \log(ed/s_0)\left| B \right|^{-1}}$ satisfies, with probability at least $1-\exp(-1/48)$, 
\begin{equation}\label{eq:oracle_ineq_lasso}
\ell(\hat \beta_{\lambda, B})\leq c_1 \norm{\zeta}_{L_q}^2\frac{s_0 \log(ed/s_0)}{|B|}\enspace.
\end{equation}
\end{proposition}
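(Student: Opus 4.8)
\beginproof
The plan is to read this statement as the specialization, to a \emph{clean} subsample $B\subset\cI$, of the general LASSO guarantee obtained by the small-ball method in \cite{LMregcomp} (Theorem~1.4), so that the work reduces to two things: checking that Assumption~\ref{ass:lasso} supplies the hypotheses of that theorem, and tracking the constants that produce the stated $\lambda$, rate, and confidence. First I would observe that since $B\subset\cI$, the data $(X_i,Y_i)_{i\in B}$ are i.i.d.\ with law $P$, so this is an honest sample of size $n:=|B|$ with no outliers; and since $\E\inr{X,t}^2=\norm{t}_2^2$, the excess risk coincides with the squared estimation error, $\ell(\hat\beta_{\lambda,B})=\norm{\hat\beta_{\lambda,B}-\beta^*}_2^2$. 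It therefore suffices to bound $\norm{\hat\beta_{\lambda,B}-\beta^*}_2$.

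The engine is the small-ball method. Writing $h:=\hat\beta_{\lambda,B}-\beta^*$, the optimality of $\hat\beta_{\lambda,B}$ in \eqref{eq:lasso} gives a basic inequality relating the empirical quadratic form $P_B\inr{X,h}^2$, the empirical multiplier term $P_B[\zeta\inr{X,h}]$, and the regularization gap $\lambda(\norm{\beta^*}_1-\norm{\hat\beta_{\lambda,B}}_1)$. By homogeneity it is enough to show that on a suitable sphere $\norm{h}_2=r$, with $h$ restricted to the $\ell_1$-cone forced by $s_0$-sparsity of $\beta^*$, the empirical objective strictly increases; this confines the minimizer to $\norm{h}_2\lesssim r^*$ for the fixed-point radius $r^*$.

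Two empirical processes must be controlled uniformly over that localized set. For the quadratic part, the norm-equivalence $(\E|\inr{X,t}|^p)^{1/p}\le L\norm{t}_2$ yields a small-ball estimate, i.e.\ a lower bound on $\bP(|\inr{X,t}|\ge c\norm{t}_2)$ uniform in $t$, from which the standard argument gives $P_B\inr{X,h}^2\gtrsim\norm{h}_2^2$ on a high-probability event once $n\gtrsim s_0\log(ed/s_0)$; this is exactly where the hypothesis $|B|\ge s_0\log(ed/s_0)$ enters, ensuring the effective dimension is dominated. For the multiplier part, the heavy-tailed noise $\zeta\in L_q$ with only $q>2$ moments is the main obstacle: the process $\sup|P_B[\zeta\inr{X,h}]|$ cannot be handled by subgaussian chaining and instead requires a truncation and $L_q$-moment argument (as carried out in \cite{LMregcomp}), producing a bound of order $\norm{\zeta}_{L_q}\,r\,\sqrt{s_0\log(ed/s_0)/n}$. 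Crucially, such noise concentrates only polynomially, so this control holds on an event of \emph{fixed constant} probability rather than an exponentially large one, which is precisely why the conclusion is stated with the constant confidence $1-\exp(-1/48)$.

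Finally I would choose $\lambda=c_0\norm{\zeta}_{L_q}\sqrt{s_0\log(ed/s_0)/n}$ large enough that the regularization term dominates the multiplier process on the constrained cone; balancing this against the quadratic lower bound identifies the fixed point $r^*\asymp\norm{\zeta}_{L_q}\sqrt{s_0\log(ed/s_0)/n}$. Squaring yields $\ell(\hat\beta_{\lambda,B})=\norm{h}_2^2\le c_1\norm{\zeta}_{L_q}^2\,s_0\log(ed/s_0)/|B|$ on the intersection of the two good events, whose probability is at least $1-\exp(-1/48)$. The sharp $\log(ed/s_0)$ factor, rather than $\log(ed)$, comes from computing the complexity (Gaussian width) of the intersection of the $\ell_1$-cone with the $\ell_2$-sphere for $s_0$-sparse centers, a refinement already present in \cite{LMregcomp}. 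I expect the heavy-tailed multiplier process to be the delicate point; everything else is a direct verification that Assumption~\ref{ass:lasso} meets the hypotheses of that reference.
\endproof
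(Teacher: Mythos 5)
Your proposal matches the paper's treatment: the paper gives no self-contained proof of Proposition~\ref{prop:lasso_basis_estimators}, but obtains it by directly invoking Theorem~1.4 of \cite{LMregcomp} (with the sharper $\sqrt{s_0\log(ed/s_0)/|B|}$ regularization level noted in the surrounding text) applied to the subsample $B\subset\cI$, whose points are i.i.d.\ with law $P$ --- exactly the reduction you carry out. Your sketch of the internals of that reference (small-ball lower bound for the quadratic process from the $L_p$--$L_2$ norm equivalence, truncation/$L_q$-moment control of the multiplier process explaining why $\zeta\in L_q$ with $q>2$ only yields a constant-probability event, and the $\ell_1$-cone width giving $\log(ed/s_0)$ rather than $\log(ed)$) is a faithful account of the cited proof and introduces no gap.
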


Proposition~\ref{prop:lasso_basis_estimators} is an (exact) oracle inequality with optimal residual term \cite{bellec2018slope}.
It is satisfied by the LASSO with a constant probability when trained on a set of informative data and for an optimal choice of regularization parameter $\lambda \sim \norm{\zeta}_{L_q}\sqrt{s_0 \log(ed/s_0)/N}$.
This regularization parameter requires the knowledge of the sparsity $s_0$. 
Proposition~\ref{prop:lasso_basis_estimators} shows that the risk bound only holds with constant probability because the noise is only assumed to have finite $L_q$-moment. 
Finally, the LASSO has to be trained with uncorrupted data; a single outlier completely breaks down its statistical properties---see Figure~1 in \cite{MOM2}. 

Let us now combine Corollary~\ref{Cor:AppliSelSousEns} and
Proposition~\ref{prop:lasso_basis_estimators} to apply the ensemble
method to this example.  Let $V,K_{\text{min}},K_{\text{max}}$ satisfy
the assumptions from Section~\ref{sec:Subsampling} and
Corollary~\ref{Cor:AppliSelSousEns}.  Denote by $s^*$ the largest 
integer $s$ such that $N/\max_{}(8V,2^{K_{\text{min}}+1})\geqslant
s\log(ed/s)$, and assume $1\leqslant \left\| \beta^* \right\|_0\leqslant s^*$ (where
$\left\|\,\cdot\,\right\|_0$ denotes the number of nonzero
coefficients).  Consider the set of subsamples $\mathcal{B}$ defined
as in Section~\ref{sec:defin-ensemble-meth}, the set
$\Lambda: =\left\{ c_0\left\| \zeta \right\| _{L_q}\sqrt{s\log (ed/s)} \right\}_{s\in [s^*]}$,
and $\mathcal{M}=\Lambda \times \mathcal{B}$. For $m=(\lambda,B)\in
\mathcal{M}$, consider the corresponding estimator $\hat{f}_m:=\hat{\beta}_{\lambda /\sqrt{\left| B \right| },B}$.
\begin{corollary}\label{coro:minmax_MOM_LASSO}
  Grant Assumptions~\ref{Ass:L4L2} and \ref{ass:lasso}.
  Let $\hat{m}$ be the output of the
ensemble method from Section~\ref{sec:defin-ensemble-meth}.
Then, with probability at least
$1-((s^*)^2N^2+1)\exp(-V/48)$, for all
$\varepsilon>0$,
\[  (1-a_{\varepsilon, V})
 \ell(\hat{\beta}_{\hat{\lambda},\hat{B}})
\leq (1+3 a_{\varepsilon, V})c_1 \norm{\zeta}_{L_q}^2\frac{\left\| \beta^* \right\|_0 \log(ed\left\| \beta^* \right\|_{0}^{-1})}{\left\lfloor N/\max_{}(4V,2^{K_{\text{min}}}) \right\rfloor } + 2 b_{\varepsilon, V}.  \]
\end{corollary}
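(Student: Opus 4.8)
The plan is to combine Corollary~\ref{Cor:AppliSelSousEns} with
Proposition~\ref{prop:lasso_basis_estimators} by instantiating the
abstract rate function $\rho$ with the explicit LASSO rate. First I
would verify that the hypotheses of Corollary~\ref{Cor:AppliSelSousEns}
are met under the present setup. Assumption~\ref{Ass:L4L2} is granted
directly; the conditions $N\geqslant \nu_{\text{max}}\max(8V,2^{K_{\text{min}}+1})$
and $V\in\left\llbracket 3|\mathcal{O}|,2^{K_{\text{max}}-1}\right\rrbracket$
are assumed to hold through the standing hypotheses on
$V,K_{\text{min}},K_{\text{max}}$ from Section~\ref{sec:Subsampling}.
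The key translation step is the definition of $\rho$ and $\nu$: for
each $s\in[s^*]$ the corresponding algorithm index is
$\lambda=c_0\norm{\zeta}_{L_q}\sqrt{s\log(ed/s)}$, and the scaling
$\hat f_m=\hat\beta_{\lambda/\sqrt{|B|},B}$ is precisely what converts
the dataset-size-dependent regularization parameter in
\eqref{eq:oracle_ineq_lasso} into the fixed label $\lambda$. I would
set $\rho(\lambda,|B|):=c_1\norm{\zeta}_{L_q}^2\, s\log(ed/s)/|B|$ and
$\nu(\lambda):=s\log(ed/s)$, where $s$ is the index attached to
$\lambda$.

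Next I would check the two structural requirements on $\rho$ and $\nu$.
The function $\rho$ is non-increasing in $|B|$ because it is inversely
proportional to $|B|$, so the monotonicity hypothesis holds. The
constant-probability excess-risk bound
$\ell(\hat f_{\lambda,B})\leqslant\rho(\lambda,|B|)$ with probability
at least $1-\exp(-1/48)$, required whenever $B\subset\mathcal{I}$ and
$|B|\geqslant\nu(\lambda)$, is exactly the content of
Proposition~\ref{prop:lasso_basis_estimators}: the condition
$|B|\geqslant s\log(ed/s)=\nu(\lambda)$ matches the proposition's
requirement $|B|\geqslant s_0\log(ed/s_0)$, and the scaling of
$\lambda$ by $1/\sqrt{|B|}$ makes the regularization parameter equal
$c_0\norm{\zeta}_{L_q}\sqrt{s\log(ed/s)|B|^{-1}}$, precisely the
optimal choice in the proposition.

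With $\rho$ and $\nu$ verified, I would invoke
\eqref{eq:residue_rate_coro} directly. The conclusion
$(1-a_{\varepsilon,V})\ell(\hat f_{\widehat m})\leqslant
(1+3a_{\varepsilon,V})\min_{\lambda\in\Lambda}\rho(\lambda,\lfloor
N/\max(4V,2^{K_{\text{min}}})\rfloor)+2b_{\varepsilon,V}$ becomes the
stated inequality once I show that the minimum over $\Lambda$ is
attained at $s=\norm{\beta^*}_0$. Here the definition of $s^*$ as the
largest $s$ with $N/\max(8V,2^{K_{\text{min}}+1})\geqslant s\log(ed/s)$
guarantees that the index $s=\norm{\beta^*}_0\leqslant s^*$ belongs to
$[s^*]$ and that the corresponding $\nu$-constraint is compatible with
the effective subsample size. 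The probability bound
\eqref{eq:proba_dev_coro} specializes with $|\Lambda|=s^*$, giving
$1-((s^*)^2N^2+1)\exp(-V/48)$ as claimed.

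The main obstacle, and the step requiring genuine care, is pinning
down why the minimizer over $\Lambda$ equals the choice $s=\norm{\beta^*}_0$.
Since $\rho(\lambda,|B|)=c_1\norm{\zeta}_{L_q}^2\,s\log(ed/s)/|B|$ is
\emph{increasing} in $s$, the naive minimum over $s\in[s^*]$ would be
achieved at $s=1$, not at $s=\norm{\beta^*}_0$. The resolution is that
Proposition~\ref{prop:lasso_basis_estimators} only guarantees the rate
when the chosen sparsity index matches (or dominates) the true
sparsity $s_0=\norm{\beta^*}_0$: for $s<\norm{\beta^*}_0$ the
regularization parameter is too small and no valid bound $\rho$ holds,
so those indices are effectively inadmissible and must be excluded from
the minimum. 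I would therefore argue that the smallest admissible index
is $s=\norm{\beta^*}_0$, at which $\rho$ evaluates to
$c_1\norm{\zeta}_{L_q}^2\norm{\beta^*}_0\log(ed\norm{\beta^*}_0^{-1})/\lfloor
N/\max(4V,2^{K_{\text{min}}})\rfloor$, yielding exactly the right-hand
side of the displayed inequality. Making this admissibility argument
precise---and confirming that the assumption
$1\leqslant\norm{\beta^*}_0\leqslant s^*$ places the correct index
inside the grid $\Lambda$---is the crux of the proof.
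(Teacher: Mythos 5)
Your plan matches the paper's proof almost step for step---same instantiation of Corollary~\ref{Cor:AppliSelSousEns} with $\nu(\lambda_s)=s\log(ed/s)$, same verification of $N\geqslant \nu_{\text{max}}\max(8V,2^{K_{\text{min}}+1})$ via the definition of $s^*$, same use of the $1/\sqrt{|B|}$ rescaling to match the proposition's optimal regularization, same probability count with $|\Lambda|=s^*$---but the step you yourself flag as the crux is a genuine gap, and as written your argument fails there. With your definition $\rho(\lambda_s,|B|)=c_1\norm{\zeta}_{L_q}^2\,s\log(ed/s)/|B|$ for \emph{all} $s\in[s^*]$, the hypothesis of Corollary~\ref{Cor:AppliSelSousEns} is simply false: for $s<\left\|\beta^*\right\|_0$, Proposition~\ref{prop:lasso_basis_estimators} provides no excess-risk bound at all (the regularization parameter is too small), so the required inequality $\ell(\hat f_{\lambda_s,B})\leqslant\rho(\lambda_s,|B|)$ with probability $1-e^{-1/48}$ cannot be certified, and the corollary cannot be invoked. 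Moreover, Corollary~\ref{Cor:AppliSelSousEns} offers no mechanism for ``excluding inadmissible indices'': its hypothesis quantifies over all $\lambda\in\Lambda$ and its conclusion takes the minimum over all of $\Lambda$, so your proposed restriction of the minimum to admissible $s$ has nothing to rest on.

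The missing idea is already signposted in the statement of Corollary~\ref{Cor:AppliSelSousEns}: the rate function is allowed to take the value $+\infty$, since $\rho:\Lambda\times\mathbb{N}^*\to\mathbb{R}_+\cup\{+\infty\}$. The paper's proof sets $\rho(\lambda_s,c)=\lambda_s^2c_1c_0^{-2}c^{-1}$ when $s\geqslant\left\|\beta^*\right\|_0$ and $\rho(\lambda_s,c)=+\infty$ otherwise. This makes the hypothesis hold for \emph{every} $\lambda\in\Lambda$: for $s\geqslant\left\|\beta^*\right\|_0$ the vector $\beta^*$ is in particular $s$-sparse and Proposition~\ref{prop:lasso_basis_estimators} applies with sparsity level $s$; for $s<\left\|\beta^*\right\|_0$ the bound $\ell(\hat f_{\lambda_s,B})\leqslant+\infty$ holds with probability one. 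The minimum over $\Lambda$ in the conclusion then automatically lands at $s=\left\|\beta^*\right\|_0$, since $s\mapsto s\log(ed/s)$ is increasing on $[1,d]$ (its derivative is $\log(d/s)\geqslant 0$), which gives exactly the displayed right-hand side. So your ``admissibility'' intuition identifies the right phenomenon, but making it rigorous requires redefining $\rho$ with the $+\infty$ convention \emph{before} applying the corollary, not filtering the minimum afterwards.
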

While Proposition~\ref{prop:lasso_basis_estimators} shows statistical
guarantee with constant probability for the estimators $\hat
\beta_{\lambda, B}$ trained on uncorrupted data, Corollary~\ref{coro:minmax_MOM_LASSO}
shows that the ensemble method improves the constant probability into
an exponential probability, allows $|\cO|$ outliers as long as $V\geq
3 |\cO|$ and selects the best hyperparameter $\lambda$. 
The proof is given in Appendix~\ref{sec:proof-corollary-LASSO}.
A similar application to the ERM in linear aggregation is given in Appendix~\ref{sec:appl-erm-line}.

\section{Numerical experiments with the LASSO}
\label{sec:numer-exper}

\subsection{Presentation}
\label{sec:presentation}

In this section, the ensemble method from
Section~\ref{sec:Subsampling} is implemented and fed with the LASSO
algorithm, as in
Section~\ref{sec:applications}. Numerical
experiments 
 are performed with various amount and \emph{types} of outliers in order to investigate their effects on the output estimator $\hat f_{\hat m}$ and the corresponding parameter $(\lambda_{\hat m}, B_{\hat m})$.

We consider a framework with $2000$ features, i.e. $\mathbb{X}=\mathbb{R}^{2000}$ and let $\beta_0\in \mathbb{R}^{2000}$ which we assume $20$-sparse. The datasets are of size $N=1000$ and we consider the following numbers of outliers
$\left| \mathcal{O} \right| =0,4,8\dots, 150$. We construct two types
of outliers ($\mathcal{O}=\mathcal{O}_1 \sqcup \mathcal{O}_2$), both
of which are present in equal amount
($\left| \mathcal{O}_1\right|=\left| \mathcal{O}_2 \right|$).  The
first type, which we call \emph{hard outliers} are defined to simulate corruption due, for instance, to hardware issues:
$X_i=(1,\dots,1)\in\bR^{2000},\quad Y_i=10000,\quad i\in \mathcal{O}_1$, and
second type, which we call \emph{heavy-tail outliers} are constructed
as
$ X_i\sim \mathcal{N}(0,I_{2000}),\quad Y_i=\left< X_i , \beta_0
  \right> +\zeta_i,\quad i\in \mathcal{O}_2$, where the variables
$(X_i,Y_i)_{i\in \mathcal{O}_2}$ are i.i.d., $\zeta_i$ is a noise
independent of $X_i$ and distributed according to Student's
t-distribution with 2 degrees of freedom.
Informative data is drawn according to $X_i\sim \mathcal{N} (0,I_{2000}),\quad Y_i=\left< X_i , \beta_0
  \right> +\zeta_i,\quad i\in \mathcal{I}$,  where variables
$(X_i,Y_i)_{i\in \mathcal{I}}$ are i.i.d.,  and $\zeta_i$ ($i\in \mathcal{I}$) is a standard Gaussian
noise independent of $X_i$.
On the one hand, a \emph{hard outlier}, if contained in the training
sample of an estimator, is likely to significantly deteriorate its
performance. On the other hand, \emph{heavy-tail
  outliers} only differ from informative data in the distribution of
the noise, and should not deteriorate too much the performance of affected estimators. Nevertheless, we expect the informative data to be preferred over the type $2$ outliers in the selected subsample $B_{\hat m}$ (this is indeed the case in Figure~\ref{fig:nb-outliers-in-subsample}). 

We consider
$\Lambda=\left\{ e^k\,\middle|\,k\in \frac{1}{2}\left\llbracket -2,4
  \right\rrbracket \right\}$ as the grid of values for the
regularization parameter of the LASSO. We implement the ensemble
method from Section~\ref{sec:Subsampling} with parameters
$V=40$, $K_{\text{min}}=3$ and $K_{\text{max}}=4$.  The set
$\mathcal{B}$ of subsamples is constructed as in
Section~\ref{sec:an-effic-constr} and we set
$\mathcal{M}=\Lambda\times \mathcal{B}$. For each
$m=(\lambda,B)\in \mathcal{M}$, we train the LASSO estimator
$\hat{\beta}_{m}$ with hyperparameter $\lambda$ and subsample
$B$ (see \eqref{eq:lasso}).
We then compute the output estimator $\hat{\beta}_{\widehat{m}}$, which uses partitions
$(T_v^{(m,m')})_{v\in [V]}$ (for $m,m'\in \mathcal{M}$) constructed as
in Section~\ref{sec:an-effic-constr}.
Let us denote $\hat{\beta}_{\widetilde{m}}$ the best oracle estimator among
$(\hat{\beta}_m)_{m\in \mathcal{M}}$, in other words, let
$\tilde{m}:=\argmin_{m\in \mathcal{M}}R(\hat{\beta}_m)$ where
$\beta\mapsto R(\beta)=\norm{\beta-\beta_0}_2^2$ is the true risk
function which is not known---so that $\tilde{m}$ cannot be computed
using only the data. For comparison, we also compute the LASSO
estimators $\hat{\beta}_{\lambda,[N]}$  trained with the whole dataset, which we will call \emph{basic estimators},
  and let $\hat{\beta}_{\widetilde{\lambda},[N]}$ be the best among those, so that $\tilde{\lambda}:=\argmin_{\lambda\in \Lambda}R(\hat{\beta}_{\lambda,[N]})$. 

\subsection{On the choices of $V$ and $K_{\text{max}}$}
The choices of $V$ and $K_{\text{max}}$ have an impact on both the
performance the output estimator and the computation time.  The higher
is $V$, the higher is the number of outliers that the MOM-selection
procedure \eqref{def:MinmaxMOMSelect} can handle, and as a matter of
fact, Theorem~\ref{thm:oracle-inequality} requires
$V\geqslant 3\left| \mathcal{O} \right|$. However, higher values of
$V$ increase computation time and deteriorates the statistical
guarantee (through the values of $a_{\varepsilon,V}$ and
$b_{\varepsilon,V}$ from the statement of
Theorem~\ref{thm:oracle-inequality}). Here we choose $V=40$, so we can
expect the minimax-MOM selection procedure to perform well at least up until we
get as many as $\left\lfloor 40/3 \right\rfloor=13$ outliers.  The
number of considered subsamples is increasing with
$K_{\text{max}}$. High values of $K_{\text{max}}$ increase computation
time. Moreover, we don't want to go for the maximum value
$K_{\text{max}}=\left\lceil \log_2 N\right\rceil $, which would imply
the training of estimators with subsamples of size $2$, which is
irrelevant.  We therefore want a low value of $K_{\text{max}}$, but we
would like to have at least one subsample which contains no
outlier. This is necessarily the case when
$2^{K_{\text{max}}}>\left| \mathcal{O} \right|$.  Since the choice of
$V=40$ allows to hope for a good selection performance up to
$\left| \mathcal{O} \right|=13$ outliers, we choose $K_{\text{max}}=4$
which indeed satisfies $2^{K_{\text{max}}}>|\cO|$.

\subsection{Results and discussion}
\begin{figure}
  \centering
  \begin{subfigure}{.30\textwidth}
  \centering
    \includegraphics[width=\linewidth]{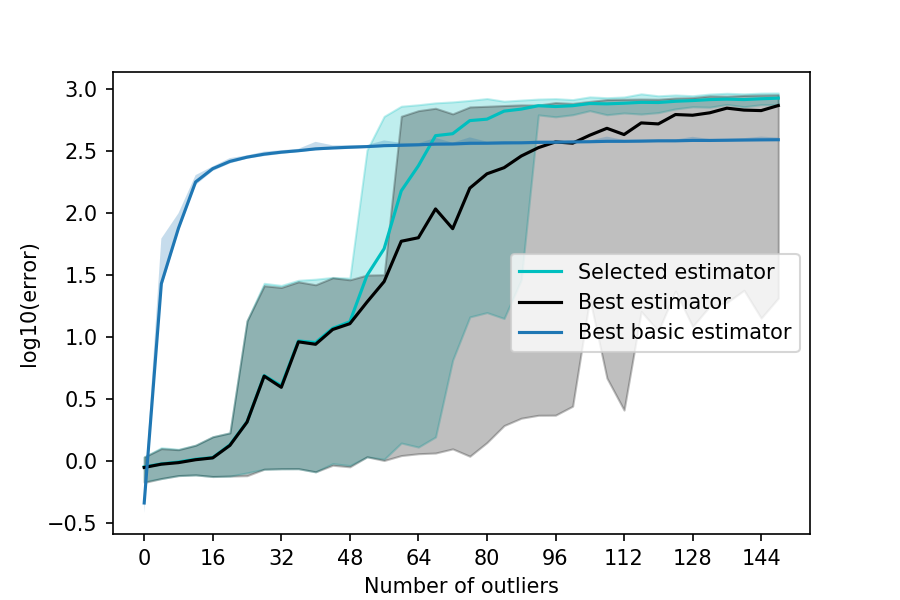}
\caption{Error of estimators $\hat{\beta}_{\widehat{m}}$,
  $\hat{\beta}_{\widetilde{m}}$ and $\hat{\beta}_{\widetilde{\lambda},[N]}$}
\label{fig:error}
  \end{subfigure}
  \begin{subfigure}{.30\textwidth}
  \centering
    \includegraphics[width=\linewidth]{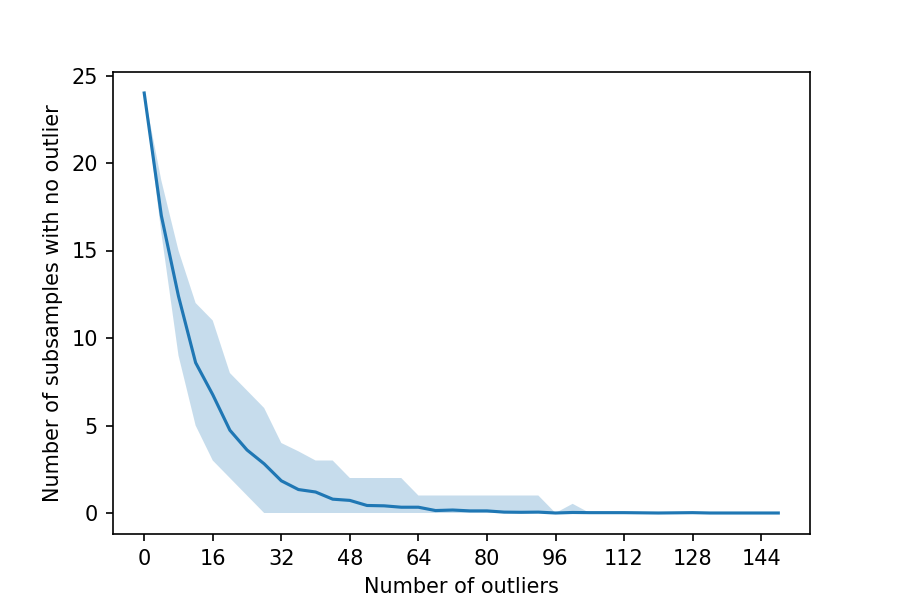}
    \caption{Number of subsamples with no outlier}
    \label{fig:subsamples-with-no-outlier}
  \end{subfigure}
  \begin{subfigure}{.30\textwidth}
  \centering
    \includegraphics[width=\linewidth]{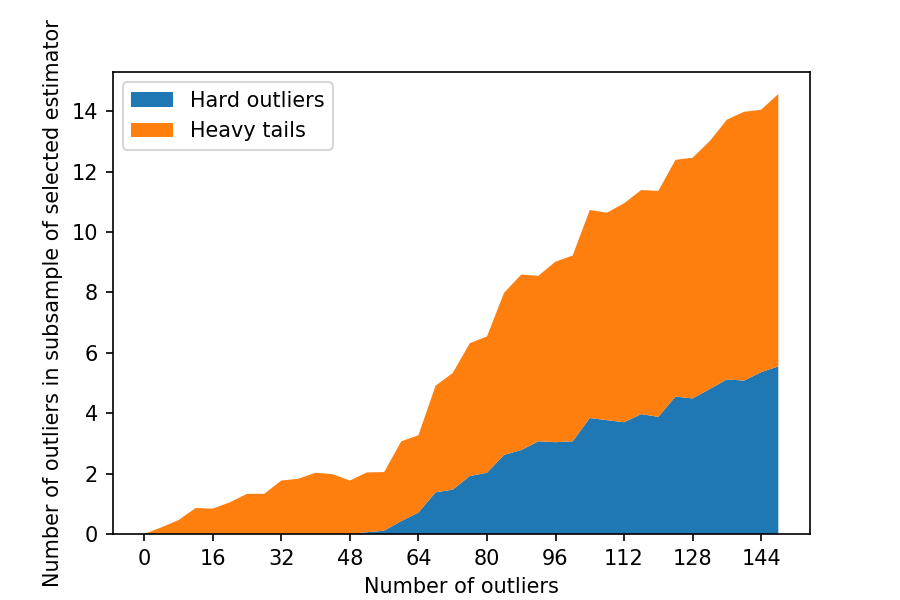}
    \caption{Number of outliers in $B_{\widehat{m}}$}
    \label{fig:nb-outliers-in-subsample}
  \end{subfigure}
  \caption{ensemble method run with $N=1000$, $V=40$,
    $K_{\text{max}}=4$, and averaged over 200 experiments.}
  \label{fig:figures}
\end{figure}

The plots presented in Figure~\ref{fig:figures} are averaged over $100$ experiments. Figure~\ref{fig:error} shows estimation error rates against the number
$\left| \mathcal{O} \right|$ of outliers in the dataset and a $95\%$
confidence interval 1) of the output estimator
$\hat{\beta}_{\widehat{m}}$ of the
ensemble method, 2) of $\hat{\beta}_{\widetilde{m}}$, the best estimator among $(\hat{\beta}_m)_{m\in \mathcal{M}}$, and 3) of the best
\emph{basic} estimator $\hat{\beta}_{\widetilde{\lambda},[N]}$.  
As soon as the dataset contains outliers, basic estimators
$(\hat{\beta}_{\lambda,[N]})_{\lambda\in \Lambda}$ have larger
errors than $\hat{\beta}_{\widetilde{m}}$ (the best estimator
computed on a subsample). 
For
$\left| \mathcal{O} \right| \leqslant 48$ the ensemble method 
procedure has the same error as the best estimator among
$(\hat{\beta}_m)_{m\in \mathcal{M}}$.  


For a given value of parameter $V$, the minimax-MOM selection procedure \eqref{def:MinmaxMOMSelect} is expected to fail at some point when the number of
outliers increases, but it seems here to resist to a much higher
number of outliers than predicted by the theory.
Theorem~\ref{thm:oracle-inequality} holds for
$\left| \mathcal{O} \right| \leqslant V/3$, that is
$\left| \mathcal{O} \right| \leqslant 13$ here.
It seems here that
the minimax-MOM selection procedure performs satisfactorily for
$\left| \mathcal{O} \right| \leqslant 48$ and even selects a
reasonably good estimator for $\left| \mathcal{O} \right| \leqslant
56$. 

Figure~\ref{fig:nb-outliers-in-subsample}
shows the number of each type of outliers in the selected subsample
$B_{\hat m}$.
The method manages to rule out hard outliers when
$\left| \mathcal{O} \right| \leqslant 48$, and the output estimator
$\hat\beta_{\hat m}$ has in these cases minimal risk, as the best
estimator $\hat\beta_{\widetilde{m}}$.
Figure~\ref{fig:subsamples-with-no-outlier} also shows that almost all
subsample contain outliers when $|\cO|\geq 48$.  Besides, the selected
subsample $B_{\hat m}$ contains heavy-tail outliers even for small
values of $\left| \mathcal{O} \right|$.  As heavy-tail outliers and
informative data define the same oracle, these heavy-tail outliers are
actually informative for the learning task and the minimax-MOM selection procedure
use this extra information automatically in an optimal way.  In
particular, the ensemble method distinguishes between non-informative
hard outliers and possibly informative heavy-tailed outliers.

Overall, our method shows very strong robustness to the presence of
outliers and outputs an estimator with the best possible performance
among the given class of estimators.




\small
\bibliography{biblio}
\bibliographystyle{plain}

\normalsize
\newpage
\appendix
\onecolumn

\section{Application to ERM and linear aggregation}
\label{sec:appl-erm-line}
This section applies Corollary~\ref{Cor:AppliSelSousEns} by
considering non-robust linear aggregation as input algorithms. 
%
Let $(F_{\lambda})_{\lambda\in \Lambda}$ be a finite collection of
subspaces of $F$, typically spanned by previous estimators. For each $\lambda\in \Lambda$,
denote by $d_{\lambda}$ the dimension of $F_{\lambda}$ and by
$f^*_{\lambda}$ an oracle in $F_{\lambda}$, meaning $f_{\lambda}^*:=\argmin_{f\in F_{\lambda}}R(f)$.
Denote $\hat{f}_{\lambda,B}$ the empirical risk minimizer (ERM) on
$F_{\lambda}$ trained with subsample $B$:
\begin{equation}\label{eq:ERM_agg_L}
\hat{f}_{\lambda,B}:=\argmin_{f\in F_{\lambda}}\frac{1}{\left| B \right| }\sum_{i\in B}^{}(Y_i-f(X_i))^2.
\end{equation}

The performance of ERM in linear aggregation like
$\hat{f}_{\lambda,B}$ under a $L_4/L_2$ assumption such as
Assumption~\ref{Ass:L4L2} have been obtained in \cite{MR3474824}.
\begin{proposition}[Theorem 1.3 in \cite{MR3474824}]\label{prop:GuillaumeShahar}
Let $\lambda\in \Lambda$. Assume that there exists $\chi_{\lambda}>0$ such that for all $f\in F_{\lambda}$, $(Pf^4)^{1/4}\leq \chi_\lambda (P f^2)^{1/2}$. Denote $\zeta_{\lambda}:=Y-f_{\lambda}^*(X)$ and assume that $(P\zeta_\lambda^4)^{1/4}\leq \sigma_\lambda$. Let $B\subset \mathcal{I}$ be such that $\left| B \right|\geqslant
(1600 \chi_\lambda^4)^2 d_{\lambda}$. Then, for every $x>0$, with probability larger than $1-\exp(-|B|/(64\chi_\lambda^8))-1/x$, the ERM $\hat{f}_{\lambda,B}$ defined in \eqref{eq:ERM_agg_L} satisfies  \[ \ell(\hat{f}_{\lambda,B})\leqslant \ell(f_{\lambda}^*)+(256)^2 \chi_\lambda^{12}\frac{\sigma_{\lambda}^2d_{\lambda}x}{\left| B \right| }.  \]
\end{proposition}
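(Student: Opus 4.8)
The plan is to reduce the claim to controlling the estimation error $\norm{\hat f_{\lambda,B}-f_\lambda^*}^2$ of the ERM inside the subspace $F_\lambda$, and then to bound this error by splitting the ERM's basic inequality into a quadratic part and a multiplier part, in the spirit of the small-ball / $L_4$--$L_2$ framework.

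First I would exploit the linearity of $F_\lambda$ and the first-order optimality of $f_\lambda^*$ to obtain an exact bias--variance split. Since $f_\lambda^*=\argmin_{f\in F_\lambda}R(f)$ is the $L^2(P_X)$-projection of $f^*$ onto $F_\lambda$, the residual $\zeta_\lambda=Y-f_\lambda^*(X)$ satisfies $P[\zeta_\lambda g]=0$ for every $g\in F_\lambda$, and Pythagoras gives $\ell(\hat f_{\lambda,B})=\norm{\hat f_{\lambda,B}-f^*}^2=\ell(f_\lambda^*)+\norm{\hat f_{\lambda,B}-f_\lambda^*}^2$. It therefore suffices to show $\norm{\hat f_{\lambda,B}-f_\lambda^*}^2\le (256)^2\chi_\lambda^{12}\sigma_\lambda^2 d_\lambda x/|B|$ on the stated event. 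Writing $\hat h=\hat f_{\lambda,B}-f_\lambda^*\in F_\lambda$ and expanding the square loss, the defining inequality $P_B[\gamma(\hat f_{\lambda,B})]\le P_B[\gamma(f_\lambda^*)]$ of the ERM becomes the \emph{basic inequality} $P_B[\hat h^2]\le 2\,P_B[\zeta_\lambda\hat h]$.

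Next I would control the two sides uniformly over $F_\lambda$. For the left-hand (quadratic) side I would establish a lower isometry: with probability at least $1-\exp(-|B|/(64\chi_\lambda^8))$, one has $P_B[h^2]\ge\tfrac12\norm h^2$ for all $h\in F_\lambda$. This is the small-ball step: the hypothesis $(Ph^4)^{1/4}\le\chi_\lambda(Ph^2)^{1/2}$ yields, via Paley--Zygmund, a small-ball estimate $P(|h(X)|\ge\tfrac12\norm h)\ge c/\chi_\lambda^4$, which after a covering/peeling argument over the sphere of the $d_\lambda$-dimensional space $F_\lambda$ gives the uniform lower bound with exponential probability, provided $|B|\gtrsim\chi_\lambda^4 d_\lambda$---exactly the sample-size requirement $|B|\ge(1600\chi_\lambda^4)^2 d_\lambda$. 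For the right-hand (multiplier) side, fix an orthonormal basis $(e_j)$ of $F_\lambda$ and write $P_B[\zeta_\lambda h]=\inr{a,W}$ with $W=|B|^{-1}\sum_{i\in B}\zeta_{\lambda,i}\phi(X_i)$ and $\phi=(e_1,\dots,e_{d_\lambda})$; orthogonality of $\zeta_\lambda$ to $F_\lambda$ kills the cross terms, so $\E\norm{W}_2^2=|B|^{-1}\E[\zeta_\lambda^2\norm{\phi(X)}^2]\le|B|^{-1}(P\zeta_\lambda^4)^{1/2}(\E\norm{\phi}^4)^{1/2}\le\sigma_\lambda^2\chi_\lambda^2 d_\lambda/|B|$, where the last bound uses Cauchy--Schwarz together with $\E[e_j^2e_k^2]\le\chi_\lambda^4$. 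Since only a fourth moment on the noise is available, I would turn this into a tail bound by Chebyshev: with probability at least $1-1/x$, $\sup_{h\in F_\lambda}P_B[\zeta_\lambda h]/\norm h\le\sqrt{x\sigma_\lambda^2\chi_\lambda^2 d_\lambda/|B|}$, which produces both the polynomial probability term $1/x$ and the linear dependence on $x$.

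Finally I would intersect the two events and plug in the random $\hat h$: the lower isometry gives $\tfrac12\norm{\hat h}^2\le P_B[\hat h^2]$, the basic inequality gives $P_B[\hat h^2]\le 2P_B[\zeta_\lambda\hat h]$, and the multiplier bound gives $P_B[\zeta_\lambda\hat h]\le\norm{\hat h}\sqrt{x\sigma_\lambda^2\chi_\lambda^2 d_\lambda/|B|}$. Dividing by $\norm{\hat h}$ and squaring yields $\norm{\hat h}^2\lesssim x\sigma_\lambda^2\chi_\lambda^2 d_\lambda/|B|$, and carefully tracking the constants through the (lossier) quantile-calibrated versions of both concentration steps absorbs the remaining powers of $\chi_\lambda$ and produces the stated factor $(256)^2\chi_\lambda^{12}$. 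I expect the genuine obstacle to be the lower-isometry step: under only an $L_4$--$L_2$ norm equivalence the design may be heavy-tailed, so standard matrix or subgaussian concentration for the empirical second-moment operator is unavailable, and one must run the small-ball argument and the covering of the sphere in $F_\lambda$ with care to obtain the uniform lower bound with the exponential probability $\exp(-|B|/(64\chi_\lambda^8))$ claimed in the statement.
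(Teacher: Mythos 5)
The paper never proves this statement: it is imported verbatim as Theorem~1.3 of \cite{MR3474824}, so your proposal can only be measured against that reference's argument---which it reconstructs essentially faithfully. The Pythagoras reduction $\ell(\hat f_{\lambda,B})=\ell(f^*_\lambda)+\|\hat f_{\lambda,B}-f^*_\lambda\|^2$ (valid because $F_\lambda$ is linear, so $\zeta_\lambda\perp F_\lambda$ in $L^2$), the basic inequality $P_B[\hat h^2]\leqslant 2P_B[\zeta_\lambda \hat h]$, the small-ball treatment of the quadratic process with probability $1-\exp(-c|B|/\chi_\lambda^8)$ under $|B|\gtrsim \chi_\lambda^8 d_\lambda$, and the Markov/Chebyshev bound on the multiplier process via $\mathbb{E}\|W\|_2^2\leqslant \sigma_\lambda^2\chi_\lambda^2 d_\lambda/|B|$ (which is what produces the unimprovable $1/x$ confidence and the linear dependence on $x$, cf.\ Proposition~1.5 of the same reference) are exactly the components of the Lecu\'e--Mendelson proof, and each of these steps as you set them up is correct.

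One intermediate claim is misstated, though harmlessly for the conclusion. The route you describe---Paley--Zygmund giving $\mathbb{P}(|h(X)|\geqslant \tfrac12\|h\|)\geqslant c/\chi_\lambda^4$, then a uniformization---cannot deliver the lower isometry $P_B[h^2]\geqslant \tfrac12\|h\|^2$: truncating at level $\|h\|/2$ bounds $P_B[h^2]$ from below by $\tfrac14\|h\|^2$ times the empirical frequency of the event $\{|h|\geqslant \tfrac12\|h\|\}$, and under $L_4$--$L_2$ alone that frequency can genuinely be of order $\chi_\lambda^{-4}$ (e.g.\ $|h|=\chi_\lambda^2\|h\|$ with probability $\chi_\lambda^{-4}$ and $\approx 0$ otherwise), so this mechanism yields only $P_B[h^2]\geqslant c\,\chi_\lambda^{-4}\|h\|^2$. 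This is not a cosmetic issue: it is precisely where the bulk of the factor $\chi_\lambda^{12}$ in the statement comes from---were the constant really $\tfrac12$, the final bound would be of order $\chi_\lambda^{2}\sigma_\lambda^2 d_\lambda x/|B|$, and the remaining ten powers of $\chi_\lambda$ could not be ``absorbed by constant tracking'' as your last paragraph suggests. With the correct $\chi_\lambda^{-4}$ isometry constant, your final assembly gives $\|\hat h\|^2\lesssim \chi_\lambda^{10}\sigma_\lambda^2 d_\lambda x/|B|$, which implies the stated bound since $\chi_\lambda\geqslant 1$ by Jensen. Relatedly, your instinct that the uniformization is the crux is right, but the fix is not a more careful covering of the sphere (passing from a net to the sphere needs upper control of the quadratic process, unavailable under heavy tails); Mendelson's small-ball method replaces the net by a VC/Rademacher bound on the indicator class $\{\mathbbm{1}\{|h|\geqslant \tfrac12\|h\|\}:h\in F_\lambda\}$, whose combinatorial dimension is $O(d_\lambda)$, and Hoeffding at deviation $\sim\chi_\lambda^{-4}$ is what produces the exponent $|B|/\chi_\lambda^8$ appearing in the proposition.
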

In Proposition~\ref{prop:GuillaumeShahar}, the (exact) oracle
inequality satisfied by $\hat{f}_{\lambda,B}$ guarantees an optimal
residual term of order $\sigma_\lambda^2 d_{\lambda}/N$ only when the
deviation parameter $x$ is constant. This may seem weak, but it cannot
be improved in general---see Proposition~1.5 in~\cite{MR3474824}: 
ERM are not robust to ``stochastic outliers'' in general. 

We can now combine these algorithms with our ensemble method.
Let $\mathcal{M}=\Lambda\times \mathcal{B}$ be as in
\eqref{eq:model_class} and $V$, $K_{\text{min}}$ and $K_{\text{max}}$
satisfy the assumptions from Section~\ref{sec:Subsampling} and
Corollary~\ref{Cor:AppliSelSousEns}. Consider the output estimator $\hat{f}_{\widehat{m}}$ from \eqref{def:MinmaxMOMSelect}. The following result combines Corollary~\ref{Cor:AppliSelSousEns} and Proposition~\ref{prop:GuillaumeShahar}.

\begin{corollary}\label{cor:ERM}
Grant Assumption~\ref{Ass:L4L2} on $F$ and assume that for all $\lambda\in\Lambda$ and all $f\in F_{\lambda}$, $(Pf^4)^{1/4}\leq \chi_\lambda (P f^2)^{1/2}$ and $(P\zeta_\lambda^4)^{1/4}\leq \sigma_\lambda$ for $\zeta_{\lambda}:=Y-f_{\lambda}^*(X)$. Assume also that  $N\geq \max_{\lambda\in\Lambda}(1600 \chi_\lambda^4)^2 d_{\lambda}\max_{}(8V,2^{K_{\text{min}}+1})$.
Then, with probability at least $1-(| \Lambda|^2 N^2+1)\exp(-V/48)$, for all $\varepsilon>0$,
\[    (1-a_{\varepsilon,V})\ell(\hat{f}_{\widehat{m}})\leqslant
   (1+3a_{\varepsilon,V})\times \min_{\lambda\in \Lambda}\left\{ \ell(f_{\lambda}^*)+2\exp(1/48)(256)^2\chi_\lambda^{12}\frac{\sigma_{\lambda}^2d_{\lambda}}{\lfloor N/\max_{}(4V,2^{K_{\text{min}}}) \rfloor} \right\}  +2b_{\varepsilon,V}.  \]
\end{corollary}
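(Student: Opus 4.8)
The plan is to obtain Corollary~\ref{cor:ERM} as a direct instance of Corollary~\ref{Cor:AppliSelSousEns}. All the ensemble-method machinery is already packaged there, so the only task is to recast Proposition~\ref{prop:GuillaumeShahar} into the exact input format Corollary~\ref{Cor:AppliSelSousEns} requires: a rate function $\rho(\lambda,\cdot)$ that is non-increasing in its second argument, together with a threshold $\nu(\lambda)$, such that $\ell(\hat f_{\lambda,B})\leqslant\rho(\lambda,|B|)$ holds with probability at least $1-\exp(-1/48)$ for every $\lambda\in\Lambda$ and every $B\subset\cI$ with $|B|\geqslant\nu(\lambda)$. Accordingly I set
\[ \nu(\lambda):=(1600\chi_\lambda^4)^2 d_\lambda \qquad\text{and}\qquad \rho(\lambda,n):=\ell(f_\lambda^*)+2\exp(1/48)(256)^2\chi_\lambda^{12}\frac{\sigma_\lambda^2 d_\lambda}{n}. \]

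The main step is to verify the constant-probability guarantee for this $\rho$, which is the only genuinely quantitative point. Fix $\lambda\in\Lambda$ and $B\subset\cI$ with $|B|\geqslant\nu(\lambda)$, and apply Proposition~\ref{prop:GuillaumeShahar} with the explicit deviation parameter $x:=2\exp(1/48)$. The resulting residual term is then exactly $2\exp(1/48)(256)^2\chi_\lambda^{12}\sigma_\lambda^2 d_\lambda/|B|$, i.e.\ precisely the $n=|B|$ value of $\rho(\lambda,\cdot)$, so it remains only to check that the failure probability $\exp(-|B|/(64\chi_\lambda^8))+1/x$ is at most $\exp(-1/48)$. Since $1/x=\frac{1}{2}\exp(-1/48)$, it suffices to show $\exp(-|B|/(64\chi_\lambda^8))\leqslant\frac{1}{2}\exp(-1/48)$. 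Here the threshold $\nu(\lambda)$ does the work: $|B|\geqslant\nu(\lambda)=1600^2\chi_\lambda^8 d_\lambda$ forces $|B|/(64\chi_\lambda^8)\geqslant 1600^2 d_\lambda/64=40000\,d_\lambda\geqslant 40000$, so that $\exp(-|B|/(64\chi_\lambda^8))\leqslant\exp(-40000)$, which is vastly smaller than $\frac{1}{2}\exp(-1/48)$. This is where the factor $2\exp(1/48)$ in the final statement originates, and it is the only estimate that needs care.

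The remaining hypotheses are immediate. The map $n\mapsto\rho(\lambda,n)$ is non-increasing since it is a constant plus a positive multiple of $1/n$. The sample-size requirement $N\geqslant\nu_{\text{max}}\max(8V,2^{K_{\text{min}}+1})$ of Corollary~\ref{Cor:AppliSelSousEns}, with $\nu_{\text{max}}=\lceil\max_{\lambda\in\Lambda}\nu(\lambda)\rceil=\lceil\max_{\lambda\in\Lambda}(1600\chi_\lambda^4)^2 d_\lambda\rceil$, is precisely the assumption imposed here (up to the harmless integer ceiling), and Assumption~\ref{Ass:L4L2} on $F$ together with the range condition on $V$ are assumed verbatim. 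Invoking Corollary~\ref{Cor:AppliSelSousEns} then yields, with probability at least $1-(|\Lambda|^2 N^2+1)\exp(-V/48)$ and for every $\varepsilon>0$,
\[ (1-a_{\varepsilon,V})\,\ell(\hat f_{\widehat m})\leqslant(1+3a_{\varepsilon,V})\min_{\lambda\in\Lambda}\rho\left(\lambda,\left\lfloor\frac{N}{\max(4V,2^{K_{\text{min}}})}\right\rfloor\right)+2b_{\varepsilon,V}, \]
and substituting the definition of $\rho$ reproduces the announced inequality. The only obstacle worth flagging is the bookkeeping of the deviation parameter $x$ in the second step; the rest is a routine checking of the hypotheses of Corollary~\ref{Cor:AppliSelSousEns}.
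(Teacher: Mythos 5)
Your proposal is correct and follows essentially the same route as the paper's own proof: the identical choices $x=2\exp(1/48)$, $\nu(\lambda)=(1600\chi_\lambda^4)^2 d_\lambda$ and $\rho(\lambda,n)=\ell(f_\lambda^*)+2\exp(1/48)(256)^2\chi_\lambda^{12}\sigma_\lambda^2 d_\lambda/n$, followed by a direct application of Corollary~\ref{Cor:AppliSelSousEns}. The only difference is cosmetic: you spell out the verification that $\exp(-|B|/(64\chi_\lambda^8))+1/x\leqslant\exp(-1/48)$ (via $1/x=\tfrac{1}{2}\exp(-1/48)$ and $|B|/(64\chi_\lambda^8)\geqslant 40000\,d_\lambda$), a step the paper asserts without detail.
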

\begin{proof}
The proof follows from Proposition \ref{prop:GuillaumeShahar} and Corollary
\ref{Cor:AppliSelSousEns}. Let us check the assumption and the features of both results. For $x=2\exp(1/48)$ and when $|B|\geq (1600\chi_\lambda^4)^2d_\lambda$ we have $1-\exp(-|B|/(64\chi_\lambda^8))-1/x\geq 1-\exp(-1/48)$ therefore, $\hat f_{\lambda, B}$ satisfies an (exact) oracle inequality with probability larger than $1-\exp(-1/48)$ when $|B|\geq \nu(\lambda) := (1600 \chi_\lambda^4)^2d_\lambda $ with a residual term given by 
\begin{equation*}
\rho(\lambda, |B|) = \ell(f_{\lambda}^*)+2(256)^2\exp(1/48) \chi_\lambda^{12}\frac{\sigma_{\lambda}^2d_{\lambda}}{\left| B \right|}.
\end{equation*}Therefore, all the condition of Corollary \ref{Cor:AppliSelSousEns} are satisfied and the result follows from a direct application of the latter result.
\end{proof}

\section{Lemmas and proofs}\label{sec:Proof}
Let $\mathcal{V}^{(m,m')}:=\left\{ v\in
  [V]\,\middle|\,T_v^{(m,m')}\subset \mathcal{I} \right\}$ denote the
set of indices of blocks from the partition $(T_v^{(m,m')}:v\in[V])$ containing only informative data. In particular, we have
\begin{equation}
\label{eq:only-informative-card}
\left| \mathcal{V}^{(m,m')} \right| \geqslant V-\left| \mathcal{O} \right|.
\end{equation}

\begin{lemma}
\label{lm:conditional-variance}
Let $m,m'\in \mathcal{M}$ and $v\in \mathcal{V}^{(m,m')}$. The conditional variance of random variable
$P_{T_v^{(m,m')}}\left[ \gamma(\hat{f}_m)-\gamma(\hat{f}_{m'}) \right]$ given random variables $(X_i,Y_i)_{i\in B_m\cup B_{m'}}$ is bounded from above as:
\[ \operatorname{Var}\left( P_{T_v^{(m,m')}}\left[ \gamma(\hat{f}_m)-\gamma(\hat{f}_{m'}) \right]\,\middle|\, (X_i,Y_i)_{i\in B_m\cup B_{m'}} \right)\leqslant C_{m,m'},  \]
where
\[ C_{m,m'}:=\frac{V}{N}\left( 16\chi^4\left( \ell(\hat{f}_m)^2+\ell(\hat{f}_{m'})^2 \right)+64\sigma^2\left( \ell(\hat{f}_m)+\ell(\hat{f}_{m'}) \right) \right). \]
\end{lemma}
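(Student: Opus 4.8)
The plan is to condition on the training data so as to reduce the claim to a single-sample second-moment bound, and then to expand the difference of losses and control each resulting piece by one of the two halves of Assumption~\ref{Ass:L4L2}.

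First I would fix $m,m'$ and $v\in\mathcal{V}^{(m,m')}$ and condition on $(X_i,Y_i)_{i\in B_m\cup B_{m'}}$. Under this conditioning the estimators $\hat{f}_m$ and $\hat{f}_{m'}$ become fixed elements of $F$. Because $v\in\mathcal{V}^{(m,m')}$ we have $T_v^{(m,m')}\subset\mathcal{I}$, and by construction $T_v^{(m,m')}$ is disjoint from $B_m\cup B_{m'}$; hence, conditionally, the pairs $(X_i,Y_i)_{i\in T_v^{(m,m')}}$ are still i.i.d.\ with law $P$ and independent of the conditioning variables. Therefore the conditional variance of the empirical mean equals $|T_v^{(m,m')}|^{-1}$ times the conditional variance of a single summand, whence
\[
\operatorname{Var}\!\left(P_{T_v^{(m,m')}}\!\left[\gamma(\hat{f}_m)-\gamma(\hat{f}_{m'})\right]\,\middle|\,(X_i,Y_i)_{i\in B_m\cup B_{m'}}\right)\leqslant\frac{1}{|T_v^{(m,m')}|}\,P\!\left[\left(\gamma(\hat{f}_m)-\gamma(\hat{f}_{m'})\right)^2\right].
\]
Since $|T_v^{(m,m')}|\geqslant N/(4V)$ by construction, it suffices to prove
\[
P\!\left[\left(\gamma(\hat{f}_m)-\gamma(\hat{f}_{m'})\right)^2\right]\leqslant 4\chi^4\!\left(\ell(\hat{f}_m)^2+\ell(\hat{f}_{m'})^2\right)+16\sigma^2\!\left(\ell(\hat{f}_m)+\ell(\hat{f}_{m'})\right),
\]
because multiplying through by $4V/N$ reproduces $C_{m,m'}$ exactly.

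Next I would carry out the algebra. Setting $a:=\hat{f}_m-f^*$ and $b:=\hat{f}_{m'}-f^*$, which both belong to $F$ since $F$ is linear and $f^*\in F$, a direct expansion of the squares gives the pointwise identity
\[
\gamma(\hat{f}_m)-\gamma(\hat{f}_{m'})=2(b-a)(Y-f^*)-\left(b^2-a^2\right).
\]
Applying $(u-w)^2\leqslant 2u^2+2w^2$ splits the target second moment into a noise term $8\,P[(b-a)^2(Y-f^*)^2]$ and a quadratic term $2\,P[(b^2-a^2)^2]$. For the noise term I would invoke the second inequality of Assumption~\ref{Ass:L4L2} applied to $b-a=\hat{f}_{m'}-\hat{f}_m\in F$ (here using that $\{f-f^*:f\in F\}=F$), giving $P[(b-a)^2(Y-f^*)^2]\leqslant\sigma^2\norm{b-a}^2$, and then $\norm{b-a}^2\leqslant 2\norm{a}^2+2\norm{b}^2=2(\ell(\hat{f}_m)+\ell(\hat{f}_{m'}))$. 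For the quadratic term I would use $(b^2-a^2)^2\leqslant(a^2+b^2)^2\leqslant 2a^4+2b^4$ together with the first inequality of Assumption~\ref{Ass:L4L2} applied to $a,b\in F$, which yields $P[a^4]\leqslant\chi^4\norm{a}^4=\chi^4\ell(\hat{f}_m)^2$ and likewise for $b$. Summing the two contributions gives precisely the displayed bound on the second moment, and the lemma follows.

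The only genuinely delicate point is the conditioning step: one must verify that conditioning on $(X_i,Y_i)_{i\in B_m\cup B_{m'}}$ — a set that may itself contain outliers — leaves the evaluation block $T_v^{(m,m')}$ distributed as a clean i.i.d.\ $P$-sample. This is exactly where the disjointness of $T_v^{(m,m')}$ from $B_m\cup B_{m'}$ and the membership $v\in\mathcal{V}^{(m,m')}$ (ensuring $T_v^{(m,m')}\subset\mathcal{I}$) are used; everything downstream is routine moment arithmetic.
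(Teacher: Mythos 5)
Your proposal is correct and takes essentially the same route as the paper's proof: condition on $(X_i,Y_i)_{i\in B_m\cup B_{m'}}$, reduce the conditional variance to $\left|T_v^{(m,m')}\right|^{-1}$ times a single-summand second moment, control that second moment via the quadratic/multiplier decomposition of the loss differences together with the two halves of Assumption~\ref{Ass:L4L2}, and finish with $\left|T_v^{(m,m')}\right|\geqslant N/(4V)$, arriving at the identical constant $C_{m,m'}$. The only (immaterial) difference is the order of the algebra: the paper first splits $\gamma(\hat{f}_m)-\gamma(\hat{f}_{m'})$ at $\gamma(f^*)$ and decomposes each excess loss separately, whereas you expand jointly as $2(b-a)(Y-f^*)-(b^2-a^2)$ and apply the noise-moment assumption to the difference $\hat{f}_{m'}-\hat{f}_m$, which is legitimate exactly for the reason you give, namely that $F$ is a linear space containing $f^*$ so that $\{f-f^*: f\in F\}=F$.
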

\begin{proof}
By assumption, random variables $(X_i,Y_i)_{i\in \mathcal{I}}$ are independent. In particular, random variables $(X_i,Y_i)_{i\in T_v^{(m,m')}}$ are independent conditionally to $(X_i,Y_i)_{i\in B_m\cup B_{m'}}$ since $v\in\mathcal{V}^{(m,m')}$. Using the shorthand notation $\operatorname{Var}_{m,m'}(\,\cdot\,):=\operatorname{Var}\left( \,\cdot\,\,\middle|\, (X_i,Y_i)_{i\in B_m\cup B_{m'}} \right)$, we have
\begin{align*}
\operatorname{Var}_{m,m'}\left( P_{T_v^{(m,m')}}\left[ \gamma(\hat{f}_m)-\gamma(\hat{f}_{m'}) \right]  \right)&=\operatorname{Var}_{m,m'}\left( \frac{1}{\left| T_v^{(m,m')} \right| }\sum_{i\in T_v^{(m,m')}}^{}(\gamma(\hat{f}_m)-\gamma(\hat{f}_{m'}))(X_i,Y_i) \right)\\  
&=\frac{1}{\left| T_v^{(m,m')} \right|^2 }\sum_{i\in T_v^{(m,m')}}^{}\operatorname{Var}_{m,m'}\left( (\gamma(\hat{f}_m)-\gamma(\hat{f}_{m'}))(X_i,Y_i) \right).
\end{align*}
Fix $i\in T_v^{(m,m')}\subset \mathcal{I}$, and let us bound from above each variance terms from the latter expression:
\begin{align*}
\operatorname{Var}_{m,m'}\left( (\gamma(\hat{f}_m)-\gamma(\hat{f}_{m'}))(X_i,Y_i) \right)&\leqslant \mathbb{E}\left[ \left(  (\gamma({\hat{f}_m})-\gamma(\hat{f}_{m'}))(X_i,Y_i) \right)^2\,\middle|\,(X_{i'},Y_{i'})_{i'\in B_m\cup B_{m'}} \right] \\ 
&=P\left[  \left( \gamma(\hat{f}_m)-\gamma(\hat{f}_{m'}) \right)^2 \right]\\
&= P\left[ (\gamma(\hat{f}_m)-\gamma(f^*)+\gamma(f^*)-\gamma(\hat{f}_{m'}))^2 \right]\\
&\leqslant 2\,  P\left[ (\gamma(\hat{f}_m)-\gamma(f^*))^2 \right]+2\, P\left[ (\gamma(\hat{f}_{m'})-\gamma(f^*))^2 \right],
\end{align*}
where we used the basic inequality $(x+y)^2\leqslant 2(x^2+y^2)$ in the last inequality.
Let us bound from above the first term. The second term is handled similarly. We use a quadratic/multiplier decomposition of the excess loss:
\begin{align*}
P\left[ (\gamma(\hat{f}_m)-\gamma(f^*))^2 \right] &=P\left[ \left( \left(  \hat{f}_m-f^* \right)^2-2(Y-f^*)(\hat{f}_m-f^*) \right)^2  \right]\\ 
&\leqslant 2 P\left[ (\hat{f}_m-f^*)^4 \right]+8 P\left[ (Y-f^*)^2(\hat{f}_m-f^*)^2 \right].
\end{align*}
By Assumption~\ref{Ass:L4L2}, it follows that
\begin{equation*}
P\left[ (\hat{f}_m-f^*)^4 \right]\leqslant \chi^4 \left( P\left[  (\hat{f}_m-f^*)^2 \right] \right)^2  = \chi^4\   \ell(\hat{f}_m)^2.
\end{equation*}
Likewise, Assumption~\ref{Ass:L4L2} yields
\begin{equation*}
P\left[ (Y-f^*)^2(\hat{f}_m-f^*)^2 \right]\leqslant \sigma^2 P\left[  (\hat{f}_m-f^*)^2 \right]  =\sigma^2  \ell(\hat{f}_m).
\end{equation*}
The result follows from combining these pieces and using $\left| T_v^{(m,m')} \right| \geqslant N/4V$.
\end{proof}

\begin{lemma}
\label{lm:encadrement-T}
With probability higher than $1-\left| \mathcal{M} \right|^2 e^{-(V-\left| \mathcal{O} \right| )/32}$, for all $m,m'\in \mathcal{M}$,
\[ \ell(\hat{f}_m)-\ell(\hat{f}_{m'})-\sqrt{8\, C_{m,m'}}\leqslant \mathcal{T}(m,m')\leqslant \ell(\hat{f}_m)-\ell(\hat{f}_{m'})+\sqrt{8\, C_{m,m'}}  \]
where $\mathcal{T}(m,m'):=\med_{v\in [V]}\left\{  P_{T_v^{(m,m')}}\left[ \gamma(\hat{f}_m)-\gamma(\hat{f}_{m'}) \right] \right\}$.
\end{lemma}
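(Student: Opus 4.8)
The plan is to read $\mathcal{T}(m,m')$ as a median-of-means estimator of the real number $\mu:=\ell(\hat{f}_m)-\ell(\hat{f}_{m'})$ and to control its fluctuations block by block, conditionally on the two trained estimators. First I would fix $(m,m')$, write $\mathcal{F}$ for the $\sigma$-field generated by $(X_i,Y_i)_{i\in B_m\cup B_{m'}}$ (which freezes $\hat{f}_m$ and $\hat{f}_{m'}$), and set $Z_v:=P_{T_v^{(m,m')}}[\gamma(\hat{f}_m)-\gamma(\hat{f}_{m'})]$. For a good block $v\in\mathcal{V}^{(m,m')}$, the data $(X_i,Y_i)_{i\in T_v^{(m,m')}}$ are i.i.d.\ with law $P$ and independent of $\mathcal{F}$ (since $T_v^{(m,m')}\subset\mathcal{I}$ is disjoint from $B_m\cup B_{m'}$), so that the $\gamma(f^*)$ contributions cancel in the difference and
\[ \E\!\left[Z_v\,\middle|\,\mathcal{F}\right]=P[\gamma(\hat{f}_m)]-P[\gamma(\hat{f}_{m'})]=\ell(\hat{f}_m)-\ell(\hat{f}_{m'})=\mu. \]
Moreover the variables $(Z_v)_{v\in\mathcal{V}^{(m,m')}}$ are mutually independent given $\mathcal{F}$, as distinct blocks rely on disjoint informative samples.

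The second step turns the conditional-variance bound of Lemma~\ref{lm:conditional-variance} into a per-block deviation estimate. With $t:=\sqrt{8\,C_{m,m'}}$, Chebyshev's inequality gives, for every good block,
\[ \mathbb{P}\!\left(\left|Z_v-\mu\right|>t\,\middle|\,\mathcal{F}\right)\leqslant\frac{\operatorname{Var}(Z_v\mid\mathcal{F})}{t^2}\leqslant\frac{C_{m,m'}}{8\,C_{m,m'}}=\frac18. \]
The heart of the argument is then the classical median estimate: if $\mathcal{T}(m,m')$ falls outside the band $[\mu-t,\mu+t]$, then at least half of the $V$ blocks lie outside it, so the number of good blocks landing inside the band is at most $V/2$. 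Writing $S:=\sum_{v\in\mathcal{V}^{(m,m')}}\mathbf{1}\{|Z_v-\mu|\leqslant t\}$, the event $\{\mathcal{T}(m,m')\notin[\mu-t,\mu+t]\}$ is therefore contained in $\{S\leqslant V/2\}$, a single event handling both tails at once. Since each good block lies in the band with conditional probability at least $7/8$ and $|\mathcal{V}^{(m,m')}|\geqslant V-|\mathcal{O}|$ by \eqref{eq:only-informative-card}, the conditional mean of $S$ is at least $\tfrac78|\mathcal{V}^{(m,m')}|$, which sits above the threshold $V/2$ once $|\mathcal{O}|$ is a small enough fraction of $V$.

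It then remains to feed the independent indicators into a Hoeffding (equivalently Bernstein or Chernoff) inequality for the lower tail of $S$: the resulting conditional bound is at most $e^{-(V-|\mathcal{O}|)/32}$, and since it is uniform in the conditioning $\mathcal{F}$ it survives integration, giving the same unconditional bound for the pair $(m,m')$. A union bound over the $|\mathcal{M}|^2$ pairs then yields the claim with the factor $|\mathcal{M}|^2$. I expect the main obstacle to be the bookkeeping in this last step: one must convert the median condition into the clean binomial event $\{S\leqslant V/2\}$, correctly discount the at-most-$|\mathcal{O}|$ adversarial blocks, and verify that the gap between the conditional mean $\tfrac78|\mathcal{V}^{(m,m')}|$ and the threshold $V/2$ is large enough to produce exactly the exponent $(V-|\mathcal{O}|)/32$ — this is where the regime $V\gtrsim|\mathcal{O}|$ enters and the numerical constant gets pinned down.
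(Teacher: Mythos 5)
Your proposal is correct and takes essentially the same route as the paper's own proof: a conditional Chebyshev bound per informative block via Lemma~\ref{lm:conditional-variance} with threshold $\sqrt{8\,C_{m,m'}}$ and failure probability $1/8$, then Hoeffding's inequality on the conditionally independent indicators to guarantee that more than half of the $V$ blocks fall in the band (the paper gets the fraction $3/4$ of $|\mathcal{V}^{(m,m')}|\geq V-|\mathcal{O}|$ and uses $V\geq 3|\mathcal{O}|$ to clear the threshold $V/2$, exactly the gap you flag), and finally a union bound over the $|\mathcal{M}|^2$ pairs. The exponent works out just as you anticipate: Hoeffding with deviation $1/8$ over $|\mathcal{V}^{(m,m')}|\geq V-|\mathcal{O}|$ indicators gives $e^{-2(V-|\mathcal{O}|)(1/8)^2}=e^{-(V-|\mathcal{O}|)/32}$, and the bound is uniform in the conditioning so it integrates to the unconditional statement.
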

\begin{proof}
Fix $m,m'\in \mathcal{M}$ and $v\in \mathcal{V}^{(m,m')}$.
Conditionally to $(X_i,Y_i)_{i\in B_m\cup B_{m'}}$, it follows from Chebychev's inequality and Lemma \ref{lm:conditional-variance} that, with probability higher than $1-1/8$,
\begin{align*}
&\left| P_{T_v^{(m,m')}}\left[ \gamma(\hat{f}_m)-\gamma({\hat{f}_{m'}}) \right]- (\ell(\hat{f}_m)-\ell(\hat{f}_{m'})) \right|\\
 &\leqslant \sqrt{8\operatorname{Var}\left( P_{T_v^{(m,m')}}\left[ \gamma({\hat{f}_m})-\gamma({\hat{f}_{m'}}) \right]\,\middle|\, (X_i,Y_i)_{i\in B_m\cup B_{m'}} \right)}\\ 
&\leqslant \sqrt{8\, C_{m,m'}}.
\end{align*}
As the probability estimate does not depend on$(X_i,Y_i)_{i\in B_m\cup B_{m'}}$, the above also holds unconditionnally and,
with probability larger than $1-1/8$,
\begin{equation}
\label{eq:encadrement-P_T}
\ell(\hat{f}_m)-\ell(\hat{f}_{m'})-\sqrt{8\, C_{m,m'}} \leqslant P_{T_v^{(m,m')}}\left[ \gamma({\hat{f}_m})-\gamma({\hat{f}_{m'}}) \right] \leqslant \ell(\hat{f}_m)-\ell(\hat{f}_{m'})+\sqrt{8\, C_{m,m'}}.
\end{equation}

Denote by $\Omega_v^{(m,m')}$ the event defined by \eqref{eq:encadrement-P_T} and see that $\mathbb{P}\left[ \Omega_v^{(m,m')} \right]\geqslant 1-1/8$.  Apply now Hoeffding's inequality to random variables $\mathbbm{1}_{\Omega_v^{(m,m')}}, v\in \mathcal{V}^{(m,m')}$ which are independent conditionnally to $(X_i,Y_i)_{B_m\cup B_{m'}}$:
on an event $\Omega^{(m,m')}$ of probability larger than $1-e^{-2\left| \mathcal{V}^{(m,m')} \right| (1/8)^2}\geqslant 1-e^{-(V-\left| \mathcal{O} \right| )/32}$, see \eqref{eq:only-informative-card}, 
\begin{align*}
\frac{1}{\left| \mathcal{V}^{(m,m')} \right|}\sum_{v\in \mathcal{V}^{(m,m')}}^{}\mathbbm{1}_{\Omega_v^{(m,m')}}&\geqslant  \mathbb{E}\left[ \frac{1}{\left| \mathcal{V}^{(m,m')} \right| }\sum_{v\in \mathcal{V}^{(m,m')}}^{}\mathbbm{1}_{\Omega_v^{(m,m')}} \right] -\frac{1}{8}\\ 
&= \frac{1}{\left| \mathcal{V}^{(m,m')} \right| }\sum_{v\in \mathcal{V}^{(m,m')}}^{}\mathbb{P}\left[ \Omega_v^{(m,m')} \right]-\frac{1}{8}\geqslant\frac{3}{4}.
\end{align*}
Then, on $\Omega^{(m,m')}$, using \eqref{eq:only-informative-card} and the assumption $V\geq 3|\cO|$,
\[ \sum_{v\in \mathcal{V}^{(m,m')}}^{}\mathbbm{1}_{\Omega_v^{(m,m')}}\geqslant \frac{3}{4}\left| \mathcal{V}^{(m,m')} \right|\geqslant \frac{3}{4}\left( V-\left| \mathcal{O} \right|  \right)  \geqslant \frac{V}{2}.\]
 In other words, inequalities (\ref{eq:encadrement-P_T}) hold for more
than half of the indices $v\in [V]$. Therefore, on
event $\Omega^{(m,m')}$, the same inequality holds for the median over $v\in [V]$:
\begin{equation*}
\ell(\hat{f}_m)-\ell(\hat{f}_{m'})-\sqrt{8 C_{m,m'}}\leqslant \mathcal{T}(m,m')\leqslant \ell(\hat{f}_m)-\ell(\hat{f}_{m'})+\sqrt{8\, C_{m,m'}}.  
\end{equation*}
By a union bound, the above holds for all $m,m'\in \mathcal{M}$ with probability at least $1-\left| \mathcal{M} \right|^2 e^{-(V-\left| \mathcal{O} \right| )/32}$.
\end{proof}


\begin{lemma}
\label{lm:C}
For all $m,m'\in \mathcal{M}$, $\varepsilon^\prime>0$ and $b>0$,
\[ \sqrt{8\, C_{m,m'}}\leqslant\sqrt{\frac{8V}{N}}\left( (4\chi^2+\varepsilon^\prime)(\ell(\hat{f}_m)+\ell(\hat{f}_{m'}))+\frac{16\sigma^2}{\varepsilon^\prime} \right).  \]
\end{lemma}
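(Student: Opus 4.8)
The plan is to start from the closed-form expression for $C_{m,m'}$ provided by Lemma~\ref{lm:conditional-variance}, namely
\[ \sqrt{8\,C_{m,m'}} = \sqrt{\frac{8V}{N}}\;\sqrt{16\chi^4\bigl(\ell(\hat f_m)^2+\ell(\hat f_{m'})^2\bigr)+64\sigma^2\bigl(\ell(\hat f_m)+\ell(\hat f_{m'})\bigr)}, \]
and then to bound the inner square root by the bracketed quantity of the statement. The prefactor $\sqrt{8V/N}$ just rides along, so the whole claim reduces to an elementary inequality between the nonnegative reals $\ell(\hat f_m),\ell(\hat f_{m'})\ge 0$; the parameter $b>0$ appearing in the statement plays no role and can be discarded.

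First I would split the square root of the sum via subadditivity, $\sqrt{u+v}\le\sqrt u+\sqrt v$, separating the \emph{quadratic} contribution $16\chi^4(\ell(\hat f_m)^2+\ell(\hat f_{m'})^2)$ from the \emph{multiplier} contribution $64\sigma^2(\ell(\hat f_m)+\ell(\hat f_{m'}))$. This isolates the two qualitatively different terms and lets each be controlled by its own elementary bound.

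For the quadratic part I would use $\sqrt{\ell(\hat f_m)^2+\ell(\hat f_{m'})^2}\le \ell(\hat f_m)+\ell(\hat f_{m'})$, valid since $(a+b)^2\ge a^2+b^2$ for $a,b\ge 0$; this produces $4\chi^2(\ell(\hat f_m)+\ell(\hat f_{m'}))$ and explains the coefficient $4\chi^2$. For the multiplier part, writing $L:=\ell(\hat f_m)+\ell(\hat f_{m'})$, the term is $\sqrt{64\sigma^2 L}=8\sigma\sqrt{L}$, and the key move is a Young/AM-GM split calibrated by the free parameter $\varepsilon'$: taking $u=\varepsilon' L$ and $v=16\sigma^2/\varepsilon'$ in $2\sqrt{uv}\le u+v$ gives exactly $8\sigma\sqrt{L}\le \varepsilon' L+16\sigma^2/\varepsilon'$. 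Summing the two bounds yields $(4\chi^2+\varepsilon')L+16\sigma^2/\varepsilon'$, and restoring the factor $\sqrt{8V/N}$ completes the argument.

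There is no substantive obstacle: the statement is purely an algebraic rearrangement of Lemma~\ref{lm:conditional-variance}. The only point needing a touch of care is the calibration of the cross-term split so that $\varepsilon'$ appears precisely as the coefficient of $L$ and $16\sigma^2/\varepsilon'$ as the additive constant; the decomposition $u=\varepsilon'L$, $v=16\sigma^2/\varepsilon'$ is forced by matching these two target shapes, after which $2\sqrt{uv}=8\sigma\sqrt L$ falls out automatically.
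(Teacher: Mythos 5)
Your proof is correct and follows essentially the same route as the paper's: subadditivity of the square root to split the quadratic and multiplier terms, the bound $\sqrt{\ell(\hat f_m)^2+\ell(\hat f_{m'})^2}\leqslant \ell(\hat f_m)+\ell(\hat f_{m'})$, and the Young/AM--GM inequality calibrated by $\varepsilon'$ (your $2\sqrt{uv}\leqslant u+v$ with $u=\varepsilon' L$, $v=16\sigma^2/\varepsilon'$ is exactly the paper's $2xy\leqslant x^2/\varepsilon'+\varepsilon' y^2$). Your remark that the parameter $b>0$ in the statement is vestigial is also accurate.
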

\begin{proof}
By definition of $C_{m,m'}$ and the inequalities $\sqrt{x+y}\leqslant
\sqrt{x}+\sqrt{y}$ and $2xy\leqslant
x^2/\varepsilon^\prime+\varepsilon^\prime y^2$,
\begin{align*}
\sqrt{8 C_{m,m'}}&=\sqrt{\frac{8V}{N}\left( 16\chi^4\left( \ell(\hat{f}_m)^2+\ell(\hat{f}_{m'})^2 \right)+64\sigma^2\left( \ell(\hat{f}_m)+\ell(\hat{f}_{m'}) \right)   \right) }\\
&\leqslant \sqrt{\frac{8V}{N}}\left( 4\chi^2\sqrt{\ell(\hat{f}_m)^2+\ell(\hat{f}_{m'})^2}+8\sigma\sqrt{\ell(\hat{f}_m)+\ell(\hat{f}_{m'})} \right),\\ 
&\leqslant \sqrt{\frac{8V}{N}}\left( 4\chi^2(\ell(\hat{f}_m)+\ell(\hat{f}_{m'}))+\frac{16\sigma^2}{\varepsilon^\prime}+\varepsilon^\prime(\ell(\hat{f}_m)+\ell(\hat{f}_{m'})) \right)\\
&=\sqrt{\frac{8V}{N}}\left( (4\chi^2+\varepsilon^\prime)(\ell(\hat{f}_m)+\ell(\hat{f}_{m'}))+\frac{16\sigma^2}{\varepsilon^\prime} \right).
\end{align*}
\end{proof}

\begin{lemma}
\label{lm:encadrement-T-2}
With probability at least $1-\left| \mathcal{M} \right|^2e^{-(V-\left| \mathcal{O} \right| )/32}$, for all $m,m'\in \mathcal{M}$ and $\varepsilon>0$:
\[ (1-a_{\varepsilon,V})\ell(\hat{f}_m)-(1+a_{\varepsilon,V})\ell(\hat{f}_{m'})-b_{\varepsilon,V} \leqslant \mathcal{T}(m,m') \leqslant (1+a_{\varepsilon,V})\ell(\hat{f}_m)-(1-a_{\varepsilon,V})\ell(\hat{f}_{m'})+b_{\varepsilon,V}.  \]
\end{lemma}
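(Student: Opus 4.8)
The plan is to combine the two preceding lemmas. Lemma~\ref{lm:encadrement-T} sandwiches $\mathcal{T}(m,m')$ between $\ell(\hat{f}_m)-\ell(\hat{f}_{m'})\pm\sqrt{8\,C_{m,m'}}$ on an event of probability at least $1-\left|\mathcal{M}\right|^2 e^{-(V-\left|\mathcal{O}\right|)/32}$, while Lemma~\ref{lm:C} bounds $\sqrt{8\,C_{m,m'}}$ by a quantity that is \emph{linear} in $\ell(\hat{f}_m)+\ell(\hat{f}_{m'})$ up to a free parameter $\varepsilon'>0$. Since the inequalities to be proved are also affine in the two excess risks, they should follow by substituting the Lemma~\ref{lm:C} estimate into both sides of Lemma~\ref{lm:encadrement-T} and then fixing $\varepsilon'$ appropriately.

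First I would work on the event of Lemma~\ref{lm:encadrement-T}, whose probability is exactly the one claimed here, so that no new probabilistic argument and no further union bound are required. On that event I plug the bound of Lemma~\ref{lm:C} into the upper estimate $\mathcal{T}(m,m')\leqslant \ell(\hat{f}_m)-\ell(\hat{f}_{m'})+\sqrt{8\,C_{m,m'}}$ and regroup the right-hand side by the coefficients of $\ell(\hat{f}_m)$ and $\ell(\hat{f}_{m'})$. This produces the coefficients $1+\sqrt{8V/N}\,(4\chi^2+\varepsilon')$ and $-1+\sqrt{8V/N}\,(4\chi^2+\varepsilon')$, together with the additive residual $16\sqrt{8V/N}\,\sigma^2/\varepsilon'$. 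The lower estimate is handled identically, only flipping the sign of the $\sqrt{8\,C_{m,m'}}$ contributions.

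It then remains to match constants with $a_{\varepsilon,V}=8\chi^2\sqrt{2V/N}+2\sqrt{2}\varepsilon$ and $b_{\varepsilon,V}=64V\sigma^2/(N\varepsilon)$. Using $\sqrt{8V/N}=2\sqrt{2}\sqrt{V/N}$ and choosing $\varepsilon'=\varepsilon\sqrt{N/V}$, I would verify the identity $\sqrt{8V/N}\,(4\chi^2+\varepsilon')=8\chi^2\sqrt{2V/N}+2\sqrt{2}\varepsilon=a_{\varepsilon,V}$, which turns the two coefficients into exactly $1+a_{\varepsilon,V}$ and $-(1-a_{\varepsilon,V})$. The same substitution gives $16\sqrt{8V/N}\,\sigma^2/\varepsilon'=32\sqrt{2}\,V\sigma^2/(N\varepsilon)$, and since $32\sqrt{2}\leqslant 64$ this is at most $b_{\varepsilon,V}$; hence both displayed inequalities hold on the same event.

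The main obstacle is purely bookkeeping: aligning the powers of $\sqrt{V/N}$ and the numerical constants after the choice of $\varepsilon'$, and observing that the constant $64$ in $b_{\varepsilon,V}$ is deliberately loose, so the $32\sqrt{2}$ emerging from the computation is comfortably absorbed. No probabilistic content beyond Lemma~\ref{lm:encadrement-T} is involved, and the $\mathcal{M}^2$ union bound has already been accounted for there.
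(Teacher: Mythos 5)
Your proposal is correct and follows exactly the paper's route: the paper likewise obtains the lemma by plugging the bound of Lemma~\ref{lm:C} with the choice $\varepsilon'=\sqrt{N/V}\,\varepsilon$ into the sandwich of Lemma~\ref{lm:encadrement-T}, on that same event with no further union bound. Your explicit bookkeeping---checking $\sqrt{8V/N}\,(4\chi^2+\varepsilon')=a_{\varepsilon,V}$ exactly and that the residual $32\sqrt{2}\,V\sigma^2/(N\varepsilon)$ is absorbed by the deliberately loose constant in $b_{\varepsilon,V}$---simply spells out what the paper's one-line proof leaves implicit.
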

\begin{proof}
The result follows from Lemmas \ref{lm:encadrement-T} and \ref{lm:C} for $\varepsilon^\prime = \sqrt{N/V}\varepsilon$, together
with the definition of $a_{\varepsilon,V}$ and $b_{\varepsilon,V}$.
\end{proof}

\section{Proofs of the main results}\label{Proofs:Main}
\subsection{Proof of Theorem~\ref{thm:oracle-inequality}}\label{sec:ProofMainThm}
Assume that $a_{\varepsilon,V}<1$, otherwise the result is trivial.
Denote $m_o:=\argmin_{m\in \mathcal{M}}\ell(\hat{f}_m)$, so
\begin{equation}\label{eq:first_ineq_coro_main}
(1-a_{\varepsilon,V})\ell(\hat{f}_{\widehat{m}}) = (1-a_{\varepsilon,V})  \ell(\hat{f}_{\widehat{m}})- (1+a_{\varepsilon,V})\ell(\hat{f}_{m_o})+(1+a_{\varepsilon,V})\ell(\hat{f}_{m_o}) . 
\end{equation}

Let $\Omega$ be the event defined by Lemma \ref{lm:encadrement-T-2}. Since $V\geqslant 3|\cO|$, by Lemma \ref{lm:encadrement-T-2}
\[
\bP\pa{\Omega}\geqslant 1-\left| \mathcal{M} \right|^2e^{-(V-\left| \mathcal{O} \right| )/32}\geqslant 1-\left| \mathcal{M} \right|^2e^{-V/48}\enspace.
\]
It follows from Lemma~\ref{lm:encadrement-T-2} and \eqref{eq:first_ineq_coro_main} that, on $\Omega$,
\begin{equation}
\label{eq:3}
(1-a_{\varepsilon,V})\ell(\hat{f}_{\widehat{m}})\leqslant \max_{m\in \mathcal{M}}\mathcal{T}(\hat{m},m)+b_{\varepsilon,V}+(1+a_{\varepsilon,V})\ell(\hat{f}_{m_o}).
\end{equation}
Then, by definition of $\hat{m}$ and using Lemma \ref{lm:encadrement-T-2}, on $\Omega$,
\begin{align*}
\max_{m\in \mathcal{M}}\mathcal{T}(\hat{m},m)&=\min_{m'\in \mathcal{M}}\max_{m\in \mathcal{M}}\mathcal{T}(m',m)\leqslant \max_{m\in \mathcal{M}}\mathcal{T}(m_o,m)\\
&\leqslant \max_{m\in \mathcal{M}}\left\{ (1+a_{\varepsilon,V})\ell(\hat{f}_{m_o})-(1-a_{\varepsilon,V})\ell(\hat{f}_m)+b_{\varepsilon,V} \right\}\\ 
&=(1+a_{\varepsilon,V})\ell(\hat{f}_{m_o})-(1-a_{\varepsilon,V})\ell(\hat{f}_{m_o})+b_{\varepsilon,V}=2a_{\varepsilon,V}\ell(\hat{f}_{m_o})+b_{\varepsilon,V},
\end{align*}
where we used $1-a_{\varepsilon,V}\geqslant 0$ and the definition of $m_o$. Plugging this into (\ref{eq:3}) yields the result.

\subsection{Proof of Corollary~\ref{Cor:AppliSelSousEns}}\label{sec:ProofCorSubSel}
Let
\[ K_0:=\max_{}\left( \left\lceil  \log_2(2V) \right\rceil,K_{\text{min}} \right). \]
It follows from the assumption $V\leqslant 2^{K_{\text{max}}-1}$
that $K_0\in \left\llbracket K_{\text{min}},K_{\text{max}} \right\rrbracket$.
Besides, it follows from the above definition that:
\begin{equation}
\label{eq:1}
2^{K_0}\leqslant \max_{}(4V,2^{K_{\text{min}}}).
\end{equation}
Let also
\begin{gather*}
 \lambda_0:=\argmin_{\lambda\in \Lambda}\rho(\lambda,\lfloor N/2^{K_0} \rfloor)\qquad
    \text{and}\qquad \rho_0:=\rho(\lambda_0,\lfloor N/2^{K_0} \rfloor  ).\\
  \mathcal{K}_0:=\left\{ k\in [2^{K_0}]\,\middle|\,B_k^{(K_0)}\subset \mathcal{I} \right\},    
\end{gather*}
   which is nonempty because $\left| \mathcal{K}_0 \right| \geqslant
  2^{K_0}-\left| \mathcal{O} \right|  \geqslant 2V-\left| \mathcal{O}
  \right| \geqslant 2V-V/3\geqslant V$, where the first inequality
 follows from the definition of $\mathcal{K}_0$, the second inequality
 from the definition of $K_0$ and the third inequality from the
 assumption $V\geqslant 3\left| \mathcal{O} \right|$.

 Consider the events
\begin{align*}
\Omega_1&=\left\{
          (1-a_{\varepsilon,V})\ell(\hat{f}_{\widehat{m}})\leqslant
          (1+3a_{\varepsilon,V})\min_{m\in
          \mathcal{M}}\ell(\hat{f}_m)+2b_{\varepsilon,V} \right\}\\
\Omega_2&=\left\{ \exists k\in \mathcal{K}_0,\ \ell\left( \hat{f}_{\lambda_0,B_k^{(K_0)}} \right)\leqslant \rho_0  \right\}.  
\end{align*}
   From now on, assume that $\Omega_1\cap \Omega_2$ hold and the aim
   is to establish an upper bound on $\min_{m\in
     \mathcal{M}}\ell(\hat{f}_m)$.
 Write
\begin{align*}
\min_{m\in \mathcal{M}}\ell(\hat{f}_m)=\min_{\substack{\lambda\in
                                        \Lambda\\B\in
  \mathcal{B}}}\ell(\hat{f}_{\lambda,B})\leqslant \min_{k\in
  \mathcal{K}_0} \ell\left( \hat{f}_{\lambda_0,B_k^{(K_0)}}
  \right)&\leqslant \rho_0 =\min_{\lambda\in
  \Lambda}\rho(\lambda,\lfloor N/2^{K_0} \rfloor  )\\
&\leqslant \min_{\lambda\in \Lambda}\rho\left( \lambda, \left\lfloor \frac{N}{\max_{}(4V,2^{K_{\text{min}}})} \right\rfloor \right).
\end{align*}
Here the second inequality comes from the definition of
$\Omega_2$ and the last inequality from \eqref{eq:1}
combined with $\rho$ being nonincreasing in its second variable. Combining the above with the definition of $\Omega_1$ yields the desired inequality:
\[ (1-a_{\varepsilon,V})\ell(\hat{f}_{\widehat{m}})\leqslant
  (1+3a_{\varepsilon,V})\, \min_{\lambda\in \Lambda}\rho\left(
    \lambda, \left\lfloor  \frac{N}{\max_{}(4V,2^{K_{\text{min}}})} \right\rfloor\right)+2b_{\varepsilon,V}. \]

To conclude the proof, let us bound from below the probability of $\Omega_1\cap \Omega_2$. By Theorem \ref{thm:oracle-inequality},
\begin{align*}
\mathbb{P}\left[ \Omega_1\cap \Omega_2 \right]&=1-\mathbb{P}\left[ \Omega_1^c\cup \Omega_2^c \right]\geqslant 1-\mathbb{P}\left[ \Omega_1^c \right]-\mathbb{P}\left[ \Omega_2^c \right]\\  
&\geqslant 1-\left| \mathcal{M} \right|^2e^{-V/48}-\mathbb{P}\left[ \Omega_2^c \right]\geqslant 1-\left| \Lambda \right|^2N^2e^{-V/48}-\mathbb{P}\left[ \Omega_2^c \right],
\end{align*}
Recall that $\lfloor N/2^{K_0} \rfloor\leqslant \left|
  B_k^{(K_0)} \right|$ for all $k\in \left[ 2^{K_0} \right]$, that $\rho$ is non-increasing in its second variable and that $B_{k}^{(K_0)}, k\in \cK_0$ are disjoint, so
\begin{align*}
\mathbb{P}\left[ \Omega_2^c \right] &=\mathbb{P}\left[ \forall k\in \mathcal{K}_0,\ \ell\left(\hat{f}_{\lambda_0,B_k^{(K_0)}} \right)>\rho_0 \right]= \prod_{k\in \mathcal{K}_0}\mathbb{P}\left[ \ell\left(\hat{f}_{\lambda_0,B_k^{(K_0)}} \right)>\rho_0 \right]\\ 
&\leqslant \prod_{k\in \mathcal{K}_0}\mathbb{P}\left[ \ell\left(\hat{f}_{\lambda_0,B_k^{(K_0)}} \right)>\rho \left(\lambda_0,\left| B_k^{(K_0)} \right| \right) \right]\\ &\leqslant \exp(-|\cK_0|/48)\leq \exp(-V/48)\enspace.
\end{align*}
The third inequality follows from the excess risk bound on
$\hat{f}_{\lambda_0,B_k^{(K_0)}}$, which holds as soon as
$\left| B_k^{(K_0)} \right| \geqslant \nu(\lambda_0)$; this is indeed
case because using \eqref{eq:1}:
\begin{align*}
\left| B_k^{(K_0)} \right| \geqslant \left\lfloor \frac{N}{2^{K_0} }\right\rfloor &\geqslant
                                                                                    \left\lfloor \frac{N}{\max_{}(4V,2^{K_{\text{min}}})} \right\rfloor=\frac{N}{2\max_{}(2V,2^{K_{\text{min}}})}
 \geqslant \nu_{\text{max}}\geqslant \nu(\lambda_0),
\end{align*}
where the penultimate inequality follows from assumption $N\geqslant \nu_{\text{max}}\max_{}(8V,2^{K_{\text{min}}+1})$.
Therefore, $\mathbb{P}\left[ \Omega_1\cap \Omega_2 \right]\geqslant 1-(\left| \Lambda \right|^2N^2 + 1)\exp(-V/48)$. 
\subsection{Proof of Lemma~\ref{lem:key1}}
Start with \textit{(i)}. Let $k\in [2^K]$ and $i\in B_k^{(K)}$, which by definition of $B_k^{(K)}$ means:
  \[ \left\lfloor \frac{(k-1)N}{2^K} \right\rfloor< i\leqslant
    \left\lfloor \frac{kN}{2^K} \right\rfloor. \]
  We can bound from below as follows: let $k':=\lfloor(k-1)(2^{K'-K})\rfloor+1$,
  \[ \left\lfloor \frac{(k-1)N}{2^K} \right\rfloor = \left\lfloor
      \frac{(k-1)(2^{K'-K})N}{2^{K'}} \right\rfloor\geqslant
    \left\lfloor \frac{\lfloor(k-1)(2^{K'-K})\rfloor N}{2^K}
    \right\rfloor = \left\lfloor \frac{(k'-1)N}{2^K} \right\rfloor. \]
 Similarly, the
  upper bound is obtained as follows:
\begin{align*}
\left\lfloor \frac{kN}{2^K} \right\rfloor &= \left\lfloor
                                            \frac{((k-1)2^{K'-K}+2^{K'-K})N}{2^{K'}}
                                            \right\rfloor\leqslant
                                            \left\lfloor
                                            \frac{((k-1)2^{K'-K}+2)N}{2^{K'}}
                                            \right\rfloor\\
&\leqslant \left\lfloor \frac{(\lfloor(k-1)2^{K'-K}\rfloor+1)N}{2^{K'}} \right\rfloor=\left\lfloor \frac{k'N}{2^{K'}} \right\rfloor.
\end{align*}
  Therefore,
  \[ \left\lfloor \frac{(k'-1)N}{2^{K'}} \right\rfloor<i\leqslant \left\lfloor \frac{k'N}{2^{K'}} \right\rfloor. \]
This means $i\in B_{\lfloor(k-1)2^{K'-K}\rfloor +1}^{(K')}$.

The proof of \textit{(ii)} proceeds as follows. Let $k'\in [2^{K'}]$ and $i\in B_{k'}^{(K')}$, meaning that
  \[ \left\lfloor \frac{(k'-1)N}{2^{K'}} \right\rfloor<i\leqslant \left\lfloor \frac{k'N}{2^{K'}} \right\rfloor. \]
This can be rewritten as
  \[ \left\lfloor \frac{2^{K-K'}(k'-1)N}{2^{K}} \right\rfloor<i\leqslant \left\lfloor \frac{2^{K-K'}k'N}{2^{K}} \right\rfloor, \]
  As $(B_k^{(K)})_{k\in [2^K]}$ is a partition of $[N]$, there exists a
  unique $k\in [2^K]$ such that $i\in B_k^{(K)}$, i.e.
  \[ \left\lfloor \frac{(k-1)N}{2^K} \right\rfloor< i\leqslant
    \left\lfloor \frac{kN}{2^K} \right\rfloor. \]
  Combining the two previous displays implies that $(k'-1)2^{K-K'}<k\leqslant k'2^{K-K'}$,  meaning that $i\in B_k^{(K)}$. Moreover, applying (\ref{item:2}) to $k$ gives that $B_k^{(K)}$ is a
  subset of $B_{k'}^{(K')}$, hence the result.
\subsection{Proof of Lemma~\ref{lm:1}}
  Using (\ref{item:2}) from  Lemma~\ref{lem:key1}, we have:
\begin{align*}
B_{k_1}^{K_1}&\subset B_{k_1'}^{(3)},\quad \text{with}\quad  k_1'=\lfloor(k_1-1)2^{3-K_1}\rfloor +1,\\
B_{k_2}^{K_2}&\subset B_{k_2'}^{(3)},\quad \text{with} \quad k_2'=\lfloor(k_2-1)2^{3-K_2}\rfloor +1.
\end{align*}
In addition, $K_0$ is by definition larger than $3$ (see
\eqref{eq:2}) and (\ref{item:3}) from Lemma~\ref{lm:1} then gives:
\begin{align*}
(B_{k}^{(K_0)})_{k\in \left\rrbracket   (k_1'-1)2^{K_0-3},\ k_1'2^{K_0-3} \right\rrbracket}&\quad \text{is a
                                                     partition of}\quad B_{k_1'}^{(3)},\\
(B_{k}^{(K_0)})_{k\in \left\rrbracket   (k_2'-1)2^{K_0-3},\ k_2'2^{K_0-3} \right\rrbracket}&\quad \text{ is a
                                                     partition of
                                                     }\quad B_{k_2'}^{(3)}.
\end{align*}
Then, using the latter result and starting from the definition of $\mathcal{K}_0(K_1,k_1,K_2,k_2)$, we can write
\begin{align*}
\mathcal{K}_0(K_1,k_1,K_2,k_2)&=\left\{ k\in \left[ 2^{K_0}
    \right]\,\middle|\,B_k^{(K_0)}\cap (B_{k_1}^{(K_1)}\cup
  B_{k_2}^{(K_2)})=\varnothing \right\}\\
&\supset \left\{ k\in \left[ 2^{K_0}
                                            \right]\,\middle|\,B_k^{(K_0)}\cap (B_{k_1'}^{(3)}\cup B_{k_2'}^{(3)})=\varnothing \right\}\\
&= \left[ 2^{K_0} \right]\setminus \left( \left\rrbracket   (k_1'-1)2^{K_0-3},\ k_1'2^{K_0-3} \right\rrbracket \cup \left\rrbracket   (k_2'-1)2^{K_0-3},\ k_2'2^{K_0-3} \right\rrbracket  \right).
\end{align*}
The latter result together with the definition of $K_0$ yield the lower bound on the cardinality of $\mathcal{K}_0(k_1,K_1,K_2,k_2)$:
\begin{align*}
\left| \mathcal{K}_0(K_1,k_1,K_2,k_2) \right| &\geqslant
                                                2^{K_0}-2^{K_0-3}-2^{K_0-3}=\frac{3}{4}2^{K_0}\geqslant
                                                \frac{3}{4}2^{\log_2(V/3)+2}=V.
\end{align*}
\subsection{Proof of Lemma~\ref{lem:key3}}
Let $(m,m')\in \mathcal{M}^2$ and $v\in [V]$. By construction of
$T_v^{(m,m')}$, there exists $k\in \left[ 2^{K_0} \right]$ such that
$T_v^{(m,m')}=B_k^{(K_0)}$. Then, it follows from \eqref{eq:2} that $2^{K_0}\leqslant 8V/3$, so that we can write:
\[ \left| T_v^{(m,m')} \right| \geqslant \left\lfloor
    \frac{N}{2^{K_0}} \right\rfloor\geqslant \left\lfloor \frac{3N}{8V}
  \right\rfloor= \left\lfloor \frac{N}{4V}+\frac{N}{8V}
  \right\rfloor \geqslant \left\lfloor \frac{N}{4V}+1 \right\rfloor \geqslant \frac{N}{4V}, \]
where we used the fact that $V\leqslant N/8$ by assumption. 

\subsection{Proof of Corollary~\ref{coro:minmax_MOM_LASSO}}
\label{sec:proof-corollary-LASSO}
Let us apply Corollary~\ref{Cor:AppliSelSousEns}
with well-chosen functions $\nu\colon \lambda\to \mathbb{R}_+^*$ and
$\rho\colon \Lambda\times \mathbb{N}^*\to \mathbb{R}_+\cup \left\{
  +\infty \right\}$. For $s\in [s^*]$, denote,
\[ \lambda_s=c_0\left\| \zeta \right\|_{L_q}\sqrt{s\log (ed/s)} \]
so that $\Lambda=\left\{ \lambda_s \right\}_{s\in [s^*]}$.  For $s\in [s^*]$,
we define
\[ \nu(\lambda_s)=s\log (ed/s). \]  Then, by definition of
$s^*$, we get
$\nu_{\text{max}}=\left\lceil \max_{\lambda\in \Lambda}\nu(\lambda)
\right\rceil=\left\lceil s^*\log (ed/s^*) \right\rceil$, and
the assumption $N/(8V,2^{K_{\text{min}}+1})\geqslant s_*\log (ed/s^*)$
implies $N\geqslant \nu_{\text{max}}\max_{}(8V,2^{K_{\text{min}}+1})$
which is required to apply Corollary~\ref{Cor:AppliSelSousEns}.
Besides, we define $\rho$ as
follows. For $s\in [s^*]$ and $c\in \mathbb{N}^*$,

\[
  \rho(\lambda_s,c)=\begin{cases}
  \lambda_s^2c_1c_0^{-2}c^{-1}  &\text{if
$s\geqslant \left\| \beta^* \right\|_0$}\\
+\infty&\text{otherwise}.
  \end{cases}
\]
For all $s\geqslant \left\| \beta^* \right\|_0$, $\beta^*$ is of
course $s$-sparse, and then according to
Proposition~\ref{prop:lasso_basis_estimators}, for $B\subset
\mathcal{I}$ such that $\left| B \right|\geqslant \nu(\lambda_s)$, it
holds with probability higher than $1-e^{-1/48}$ that
\[ \ell(\hat{f}_{\lambda_s,B})=\ell(\hat{\beta}_{\lambda_s\left| B \right|^{-1/2},B})\leqslant \rho(\lambda_s,\left| B \right| ). \]
The above inequality is also true for $s<\left\| \beta^* \right\|_0$
since the right-hand side is then infinite. We can now apply
Corollary~\ref{Cor:AppliSelSousEns} which gives that with probability
higher than $1-((s^*)^2N^2+1)e^{-V/48}$, it holds that:
\[ 
 (1-a_{\varepsilon,V})\ell(\hat{f}_{\widehat{m}})\leqslant
  (1+3a_{\varepsilon,V})\, \min_{\lambda\in \Lambda}\rho\left(   \lambda,\left\lfloor  \frac{N}{\max_{}(4V,2^{K_{\text{min}}})} \right\rfloor\right)+2b_{\varepsilon,V},
 \]
and the result follows by noting that:
\begin{align*}
\min_{\lambda\in \Lambda}\rho\left(   \lambda,\left\lfloor
  \frac{N}{\max_{}(4V,2^{K_{\text{min}}})}
  \right\rfloor\right)&=\min_{s\in [s^*]}\rho\left( \lambda_s,\
  \left\lfloor  \frac{N}{\max_{}(4V,2^{K_{\text{min}}})} \right\rfloor
  \right)\\
&=\min_{s\geqslant \left\| \beta^* \right\|_0}\lambda_s^2c_1c_0^{-2}\left\lfloor
              \frac{N}{\max_{}(4V,2^{K_{\text{min}}})}
              \right\rfloor^{-1}\\
&=\min_{s\geqslant \left\| \beta^* \right\|_0}\left\| \zeta \right\|_{L_q}^2s\log (ed/s)c_1\left\lfloor
              \frac{N}{\max_{}(4V,2^{K_{\text{min}}})}
              \right\rfloor^{-1}\\
&=c_1\left\| \zeta \right\| _{L_q}^2\frac{\left\| \beta^* \right\|_0\log (ed\left\| \beta^* \right\|_0^{-1})}{\left\lfloor N/\max_{}(4V,2^{K_{\text{min}}}) \right\rfloor  }.
\end{align*}

\end{document}